\documentclass[11pt,reqno]{amsart}
\usepackage{amsmath,amssymb,amsthm,enumerate,natbib,color,bbm,ifthen,graphicx,overpic}
\usepackage[latin1]{inputenc}

\topmargin      -0.5cm
\oddsidemargin  -0.25cm
\evensidemargin -0.25cm

\textheight   235mm
\textwidth    150mm


\def\rset{\mathbb R}

\newcommand{\eqdef}{\ensuremath{\stackrel{\mathrm{def}}{=}}}

\def\Xset{\mathsf{X}} 
\def\Vset{\mathsf{V}} 
\def\Yset{\mathsf{Y}} 
\def\F{\mathcal{F}} 
\def\B{\mathcal{B}} 
\def\e{\mathcal{E}}
\def\dist{\textsf{d}}

\def\S{\mathcal{S}}

\def\M{\mathcal{M}}

\def\G{\mathbb{G}}
\def\A{\mathcal{A}}

\def\PP{\mathbb{P}} 
\def\PE{\mathbb{E}} 
\def\cPP{\check{\mathbb{P}}} 
\def\cPE{\check{\mathbb{E}}} 

\def\L{\mathcal{L}} 

\def\compact{\mathsf{K}}  

\def\deg{\textsf{deg}}

\usepackage{bm}


\newlength{\noteWidth}
\setlength{\noteWidth}{.6in}


\newtheorem{theo}{Theorem}[section]
\newtheorem{lemma}[theo]{Lemma}
\newtheorem{coro}[theo]{Corollary}
\newtheorem{prop}[theo]{Proposition}

\theoremstyle{remark}
\newtheorem{rem}{Remark}
\newtheorem{example}{Example}


\newcounter{hypoconbis}
\newcounter{saveconbis}
\newcommand\debutA{\begin{list} {\textbf{A\arabic{hypoconbis}}}{\usecounter{hypoconbis}}\setcounter{hypoconbis}{\value{saveconbis}}}
\newcommand\finA{\end{list}\setcounter{saveconbis}{\value{hypoconbis}}}

\newcounter{hypoconbisp}
\newcounter{saveconbisp}
\setcounter{saveconbisp}{1}
\newcommand\debutAp{\begin{list} {\textbf{A\arabic{hypoconbisp}'}}{\usecounter{hypoconbisp}}\setcounter{hypoconbisp}{\value{saveconbisp}}}
\newcommand\finAp{\end{list}\setcounter{saveconbisp}{\value{hypoconbisp}}}

\newcounter{hypocom}
\newcounter{savecom}
\newcommand{\debutB}{\begin{list}{\textbf{B\arabic{hypocom}}}{\usecounter{hypocom}}\setcounter{hypocom}{\value{savecom}}}
\newcommand{\finB}{\end{list}\setcounter{savecom}{\value{hypocom}}}

\newcounter{hypocomp}
\newcounter{savecomp}
\newcommand{\debutBp}{\begin{list}{\textbf{B\arabic{hypocomp}'}}{\usecounter{hypocomp}}\setcounter{hypocomp}{\value{savecomp}}}
\newcommand{\finBp}{\end{list}\setcounter{savecomp}{\value{hypocomp}}}

\newcounter{hypostab}
\newcounter{savestab}
\newcommand{\debutC}{\begin{list}{\textbf{C\arabic{hypostab}}}{\usecounter{hypostab}}\setcounter{hypostab}{\value{savestab}}}
\newcommand{\finC}{\end{list}\setcounter{savestab}{\value{hypostab}}}

\newcounter{hypodist}
\newcounter{savedist}
\newcommand{\debutD}{\begin{list}{\textbf{D\arabic{hypodist}}}{\usecounter{hypodist}}\setcounter{hypodist}{\value{savedist}}}
\newcommand{\finD}{\end{list}\setcounter{savedist}{\value{hypodist}}}

\hyphenation{op-tical net-works semi-conduc-tor IEEEtran}

\begin{document}

\title[Statistical estimation of network structures]{Estimation of network structures from partially observed Markov random fields}

\author{Yves F. Atchad\'e}  \thanks{ Y. F. Atchad\'e: University of Michigan, 1085 South University, Ann Arbor,
  48109, MI, United States. {\em E-mail address:} yvesa@umich.edu}

\subjclass[2000]{60J10, 65C05}

\keywords{Network structures, penalized likelihood inference, pseudo-likelihood, auto-models, Markov Random Fields}

\maketitle

\begin{center} (Aug. 2011) \end{center}

\begin{abstract}
We consider the estimation of high-dimensional network structures  from partially observed Markov random field data  using  a penalized pseudo-likelihood approach. We fit a misspecified model obtained by ignoring the missing data problem. We study the consistency of the estimator and derive a bound on its rate of convergence. The results obtained relate the rate of convergence of the estimator to the  extent of the missing data problem.   We report some simulation results that empirically validate some of the theoretical findings. 
\end{abstract}

\bigskip

\setcounter{secnumdepth}{3}

\section{Introduction and statement of the results}\label{intro}
The problem of high-dimensional network structure estimation has recently attracted a lot of attention in statistics and machine learning.  
Both in the continuous case using Gaussian graphical models (\cite{drtonetperlman04,meinshausen06,yuanetlin07,daspremontetal08,bickeletlevina08b,rothmanetal08,lametfan09}), and in the discrete case using Markov random fields (\cite{barnejeeetal08,hoefling09,ravikumaretal10,guoetal10}).  This paper focuses mainly on Markov Random Fields (MRF) for non-Gaussian data. The problem can be described as follows. Let $(X^{(1)},\ldots,X^{(n)})$ be $n$ i.i.d. random variables where $X^{(i)}=(X_1^{(i)},\ldots,X^{(i)}_{p})$ is a $p$-dimensional vector of dependent random variables with joint density
\begin{equation}\label{model}
f_\theta(x_1,\ldots,x_{p})=\frac{1}{Z_\theta}\exp\left\{\sum_{s=1}^{p} (A(x_s)+\theta(s,s)B_0(x_s))+\sum_{1\leq s<s'\leq p} \theta(s,s')B(x_s,x_{s'})\right\},\end{equation}
for known functions $A,\;B_0:\;\Xset\to \rset$ and a symmetric function
$B:\;\Xset\times\Xset\to \rset$, where $\Xset$ is a compact (generally finite) set. The real-valued symmetric matrix
$\theta=\{\theta(s,s'),\;1\leq s, s'\leq p\}$ is the network structure and is the parameter of interest.  The term $Z_\theta$ is
a normalizing constant.  This type of statistical models was pioneered by J. Besag (\cite{besag74}) under the name of \underline{auto-model} and we adopt the same name here, although Besag's auto-models corresponds to setting $B(x,y)=xy$ above.  The nice feature of model (\ref{model}) is that for any $1\leq s\leq p$, the conditional density of $X_s$ given $\{X_j,j\neq s\}=x\in\Xset^{p-1}$ is
\begin{equation}\label{condmodel}
f^{(s)}_\theta(u\vert x)=\frac{1}{Z^{(s)}_\theta}\exp\left\{A(u) + \theta(s,s)B_0(u) +\sum_{j\neq i}\theta(s,j)B(u,x_j)\right\},\end{equation}
for a normalizing constant $Z^{(s)}_\theta=Z^{(s)}_\theta(x)$. Therefore, $\theta(s,j)=0$ implies that $X_s$ and $X_j$ are conditionally independent given the other variables $X_k$, $k\notin \{s,j\}$. Thus estimating $\theta$ provides us with the dependence structure and the magnitude of the dependence between these variables. 

This paper focuses on the situation where the outcomes $X_j^{(i)}$ are either categorical ($\Xset$ is a finite set) or continuous bounded ($\Xset\subset\rset^{m_\Xset}$ is compact).  Based on $(X^{(1)},\ldots,X^{(n)})$, the true network structure denoted $\theta_\star=\{\theta_\star(s,s'),\;1\leq s, s'\leq p\}$ can be consistently estimated using a number of methods, even when the number of entries of $\theta_\star$ is much large than $n$ (\cite{hoefling09,ravikumaretal10,guoetal10}). For computational tractability, a pseudo-likelihood approach is often preferred, even though it  incurs a certain lost of efficiency. 
In the case of the auto-logistic model (where $\Xset=\{0,1\}$, $A_0(u)=0$, $B_0(u)=u$, $B(u,v)=uv$), \cite{guoetal10} shows  that the $\ell^1$-penalized pseudo-likelihood estimator of $\theta_\star$  is consistent with $\ell^2$ rate of convergence bounded from above by $\alpha^{-1}\sqrt{a\log p/n}$, where $a$ is the number of non-zero elements of $\theta_\star$ and $\alpha$ is the smallest eigenvalue of the information matrix. \cite{ravikumaretal10} obtained similar results for a one-neighborhood-at-the-time $\ell^1$-penalized pseudo-likelihood estimator. \cite{xueetal10} also derived some properties of the oracle estimator with the SCAD penalty.

In many situations where network estimation is needed, the network data is only partially observed because certain nodes are missing from the sample. For example, in social network analysis,  some close friends or siblings might not be part of the survey. As another example, in protein-protein networks, the analysis  is often restricted to the specific subgroup of proteins that is believed to carry a role in a given biological function. So doing, some important but not yet identified proteins might be omitted from the analysis. This paper consider the problem of network estimation from partially observed MRF data. The issue cannot be completely addressed by simply ignoring the missing nodes and assuming that the observed data follows a MRF.  This is because,  unlike Gaussian distributions,  Markov Random Field distributions are not closed under marginalization. For example, if there exist $r$  additional  nodes denoted $p+1,\ldots,p+r$ such that the joint distribution of $(X_1,\ldots,X_{p},X_{p+1},\ldots,X_{p+r})$ is an auto-model with network structure $\{\theta(s,s') ,\;1\leq s,s'\leq p+r\}$, then the joint (marginal) distribution of $(X_1,\ldots,X_{p})$ is \underline{not} of the form  (\ref{model}) in general. To take a specific example, if $r=1$ and $A=B_0\equiv 0$ and $B(x,y)=B(x)B(y)$, then the joint (marginal) distribution of $(X_1,\ldots,X_{p})$ is the mixture distribution
\[f_\theta(x_1,\ldots,x_{p})=Z^{-1}_\theta\sum_{i\in \Xset}\exp\left\{\sum_{s=1}^{p}\theta_i(s)B(x_s) + \sum_{1\leq s<s'\leq p} \theta(s,s')B(x_s)B(x_{s'})\right\},\]
where $\theta_i(s)=B(i)\theta(s,p+1)$. Furthermore, the conditional distributions are altered. Indeed, and keeping with the assumption $r=1$, if $|\theta(s,p+1)|>0$, then the conditional density of $X_s$ given $\{X_\ell,\;\ell\neq s,\,1\leq\ell\leq p\}$ depends not only $X_\ell$ for all $\ell$ such that $|\theta(s,\ell)|>0$, but also on $X_k$ for all $k$ such that $|\theta(k,p+1)|>0$. However, if $\theta(s,p+1)=0$, the conditional density of $X_s$  given $\{X_\ell,\;\ell\neq s,\,1\leq\ell\leq p\}$  remains (\ref{condmodel}). This suggests that if we ignore the missing nodes and fit the misspecified model (\ref{model}) to the observed data, the resulting estimator will be well-behaved to the extent that the missing data problem is limited. That is, to the extent that $\sum_{s=1}^p|\theta_\star(s,p+1)|$ is small in the case $r=1$  considered above.

The goal of the paper is to formalize this idea. In order to do so,  we consider an infinite-volume Markov random field model, where only part of the field is observed, and we fit the misspecified model (\ref{model}) using penalized pseudo-likelihood approach. We derive a general consistency result and show that under certain conditions, the estimators converges at the rate of $(\sqrt{a_n\log p_n/n} + \tau_n b_n)/\alpha_n$, where $p_n$ is the number of observed nodes, $a_n$ is the number of non-zero entries of the true network, $\alpha_n$ is the smallest eigenvalue of the Fisher information matrix, and where the term $\tau_nb_n$ quantifies the effect of the missing nodes (see Theorem \ref{thm3} for a more rigorous statement). We conclude that the estimator $\hat\theta_n$ is robust to a small to moderate amount of missing data. We report some simulation results that are consistent with these findings. In practical situations where MRF are used, it is often unclear whether one is dealing with a partially observed field with important missing nodes. The above discussion thus stresses the need for methods of detecting the existence of missing nodes in Markov random field data. We leave this problem for future research, as it requires a better understanding of the asymptotic behavior  of $\hat\theta_n$. 

The paper is organized as follows. The infinite-volume Markov random field setting and the estimators are presented in Section \ref{setting}. The paper presents two main results: Theorem \ref{thm2} (and Corollary \ref{coro1}) on the consistency of the estimator, and Theorem \ref{thm3} (and Corollary \ref{corothm3}) on its rate of convergence. These results are presented in Section \ref{sec:asympdist}. The simulation example is presented in Section \ref{sec:ex}. Section \ref{sec:proofs} develops the technical proofs.

\subsection{The setting}\label{setting}
Let $(\Xset,\e,\rho)$ be a measure space. We assume that $\Xset$ is a compact subset of $\rset^{m_\Xset}$, $\e$ its Borel sigma-algebra, and $\rho$ a finite measure.  The compactness of $\Xset$ is wrt the usual Euclidean metric.  $\Xset$ is the sample space of the observations $X_i$. The main case of interest is the case where $\Xset$ is finite.   Let $\S$ be a countably infinite set (typically, $\S$ is a subset of the Euclidean space $\rset^{m_\S}$  for some finite integer $m_\S\geq 1$).   The set $\S$ represents the nodes of the network. We assume that $\S$ is equipped with a linear ordering $\succeq$ (for example, the lexicographical ordering of $\rset^{m_\S}$). We introduce $\underline{\S}^2\eqdef \{(s,\ell)\in\S\times\S:\; \ell\succeq s\}$,  the set of all ordered pairs of $\S$. More generally, if $\Lambda$ is a subset of $\S$, we denote by
$\underline{\Lambda}^2$, the set of all ordered pairs $(u,v)\in\Lambda\times\Lambda$, with $v\succeq u$.   

Let
$A,\,B_0:\Xset\to\rset,\;B:\;\Xset\times\Xset\to\rset$ be \underline{known} measurable functions such that
$B(x,y)=B(y,x)$ (symmetry). We also assume that the diagonal of $B$ is $B_0$: $B(x,x)=B_0(x)$ for all $x\in\Xset$. We assume \underline{throughout the paper}  that
\begin{equation}\label{boundBC}
\|A\|_\infty<\infty,\;\;\;\|B_0\|_\infty<\infty,\;\;\;\mbox{ and }\;\;\;\|B\|_\infty<\infty.\end{equation}
In the above, $\|f\|_\infty$ is the supremum norm.

An infinite matrix is a map from $\S\times\S$ to $\rset$. For an infinite matrix $\theta:\;\S\times\S\to\rset$  and $s\in\S$, the $\theta$-neighborhood of $s$ is the set
\[\partial_\theta s\eqdef \{\ell\in\S:\; \ell\neq s \mbox{ and } |\theta(s,\ell)|>0\},\]
and the $\theta$-degree of node $s$ is the quantity (possibly infinite)
\[\textsf{deg}(s,\theta)\eqdef \sum_{\ell\in\S\setminus\{s\}}|\theta(s,\ell)|=\sum_{\ell\in\partial_\theta s}|\theta(s,\ell)|.\]
We denote 
$\M$ the space of all infinite symmetric matrices $\theta$ such that  $\deg(s,\theta)<\infty$ for all $s\in\S$.  For $q\in [1,\infty)$, we denote by $\M_{q}$ the Banach space of all infinite symmetric matrices $\theta\in\M$ such that
\[\|\theta\|_{q}\eqdef \left\{\sum_{(s,\ell)\in\underline{\S}^2}|\theta(s,\ell)|^q\right\}^{1/q}<\infty.\]

Let $(\Omega,\F)=(\Xset^\S,\e^\S)$ be the product space equipped with the product topology and its Borel sigma-algebra. For $\theta\in\M$, let $\mu_\theta$ be the probability measure on $(\Omega,\F)$ such that if $\{X_s,\;s\in\S\}$ is a stochastic process with distribution $\mu_\theta$, the conditional distribution of $X_s$ given the sigma-algebra generated by $\{X_\ell,\;\ell\neq s\}$ exists and has density (wrt $\rho$) $f_\theta^{(s)}(\cdot\vert x)$, where for $u\in\Xset$, $x\in\Xset^{\S\setminus\{s\}}$,
\begin{equation}\label{fullcond}
f^{(s)}_\theta(u\vert x)=\frac{1}{Z^{(s)}_\theta}\exp\left\{A(u)+\theta(s,s)B_0(u)+\sum_{\ell\in\S\setminus\{s\}}\theta(s,\ell)B(u,x_\ell)\right\},\end{equation}
for a normalizing constant $Z^{(s)}_\theta$. Notice that $f^{(s)}_\theta(u\vert x)$ actually depends only on $x_{\partial_\theta s}\eqdef\{x_\ell:\; \ell\in\partial_\theta s\}$. Under
(\ref{boundBC}) and for $\theta\in\M$, such distribution $\mu_\theta$
exists (but might not be unique in general). We refer the reader to Appendix 1 for a precise definition and
existence of $\mu_\theta$. A random process $\{X_s,\;s\in\S\}$ with
distribution $\mu_\theta$ is called an infinite-volume auto-model random
  field. We denote by $\PE_\theta$ the expectation operator with
respect to $\mu_\theta$ on $(\Omega,\F)$. When $\theta$ is the true network structure $\theta_\star$ (introduced below), we simply write $\PE_\star$ instead of $\PE_{\theta_\star}$. For $\Lambda\subseteq \S$, we denote $X_\Lambda$ the stochastic process $\{X_s,\;s\in\Lambda\}$. From $f_\theta^{(s)}$, and for a measurable function $H:\Xset\times \Xset^{\S\setminus\{s\}}\to\rset$, we can obtain the conditional expectation $\PE_\theta\left(H(X_s,X_{\S\setminus\{s\}})\vert X_{\S\setminus\{s\}}\right)$ as $\int_{\Xset}H(u,X_{\S\setminus\{s\}})f^{(s)}_\theta(u\vert X_{\S\setminus\{s\}})du$, provided the integral is well defined. And we can define similarly the conditional variance $\textsf{Var}_\theta\left(H(X_s,X_{\S\setminus\{s\}})\vert X_{\S\setminus\{s\}}\right)$.

For $\theta_\star\in\M$, let $\{X^{(i)},\;i\geq 1\}$ be a sequence of i.i.d.  infinite-volume random fields with distribution $\mu_{\theta_\star}$ defined on some probability space with probability measure $\cPP_{\star}$ and expectation operator $\cPE_\star$. Let $\{D_n,\;n\geq 1\}$ be a sequence of increasing finite subsets of $\S$ such that $D_n\uparrow\S$. For a finite set $A$, $|A|$ denotes its cardinality and we set $p_n=|D_n|$.  For $n\geq 1$, let $d_n=p_n(p_n+1)/2$ and denote $\M^{(n)}$ the space of all symmetric finite matrices $\{\theta(s,\ell),\;s,\ell\in D_n\}$, that we identify with $\rset^{d_n}$.

We assume that for some $n\geq 1$, we observe partially each of the random field $X^{(i)}$ ($1\leq i\leq n$) over the domain $D_n$ giving rise to observations $X_{D_n}^{(i)}=\{X^{(i)}_s,\;s\in D_n\}$. The remaining points $\S\setminus D_n$ are not  known and the associated random variables $X_{\S\setminus D_n}$ are not observed.  We are interested in estimating the infinite matrix $\theta_\star$.  For $s\in\S$, we define $\partial s=\partial_{\theta_\star} s$ and called it the (true) neighborhood of $s$. We also define $\partial_n s\eqdef D_n\setminus\{s\}$. Since the neighborhood system $\{\partial s,\;s\in\S\}$ is not known,  we introduce the \underline{approximate} full conditional distributions
\begin{equation}\label{approxfullcond}
f^{(s)}_\theta(u\vert x_{\partial_n s})\eqdef \frac{1}{Z_{n,\theta}^{(s)}}\exp\left(A(u) +\theta(s,s)B_0(u)+\sum_{\ell\in \partial_n s}\theta(s,\ell)B(u,x_\ell)\right),\end{equation}
for some normalizing constant $Z_{n,\theta}^{(s)}$. For $\lambda\geq 0$, let $q_\lambda:\; [0,\infty)\to [0,\infty)$ a penalty function. We then define the functions
\[
\bar \ell_n(\theta)\eqdef \sum_{i=1}^n\sum_{s\in D_n} \log f^{(s)}_\theta(X^{(i)}_s\vert X^{(i)}_{\partial_n s}),\;\;\mbox{ and } \;\; Q_n(\theta)= \bar \ell_n(\theta)-\sum_{(s,\ell)\in \underline{D}_n^2}q_{\lambda_n}(|\theta(s,\ell)|),\;\;\theta\in\M^{(n)},\]
for some parameter $\lambda_n>0$.   We are mainly interested in convex penalty functions, particularly the $\ell^1$ penalty for which $q_\lambda(x)=\lambda x$. But we develop much of the results under the general condition A\ref{A0} below that applies in principle to non-convex penalties such as the SCAD penalty of \cite{fanetli01}. 

\vspace{0.2cm}
\debutA
\item \label{A0}
For any $\lambda\geq 0$, $q_\lambda(0)=0$, $q_\lambda$ is right-continuous at $0$ and  differentiable on $(0,\infty)$ and 
\begin{equation}\label{condq}
\sup_{\lambda>0}\sup_{x>0}|q'_\lambda(x)|/\lambda<\infty. \end{equation}
\finA
\medskip

 Finally, we define 
\[\textsf{Argmax}\, Q_n\eqdef \{\theta\in\M^{(n)}:\; Q_n(\theta)=\sup_{\vartheta\in\M^{(n)}} Q_n(\vartheta)\},\]
and we call any element $\hat\theta_n$ of $\textsf{Argmax}\, Q_n$ a maximizer of $Q_n$, that is a penalized pseudo-likelihood estimator of $\theta_\star$.

\begin{rem}
We want to stress the fact that the sets $\S$ and $D_n$ are purely conceptual and need not be known. This is because we have replaced the full conditional density (\ref{fullcond}) by the approximation (\ref{approxfullcond}) in which the neighborhood of $s$ is $\partial_n s=D_n\setminus\{s\}$, and without any loss of generality we can replace $D_n$ by $\{1,\ldots,p_n\}$. As a result, the computation of $\hat\theta_n$ does not make use of $\S$ and $D_n$. For instance, with the $\ell^1$ penalty, one obtains  the same $\ell^1$-penalized pseudo-likelihood estimator as in \cite{hoefling09,guoetal10}. 
\end{rem}

It is useful to have some simple conditions under which $\textsf{Argmax}\, Q_n$ is not empty. 

\begin{prop}Fix $n\geq 1$.  Suppose that for any $s\in\S$, there exists a finite constant $c(s)$ such that for all $\theta\in\M^{(n)}$, all $u\in \Xset$ and for all $x_{\partial_n s}\in \Xset^{\partial_n s}$,
\[f^{(s)}_\theta(u\vert x_{\partial_n s})\leq c(s).\]
Suppose also that for any $\alpha,\lambda\geq 0$ the set $\{x\geq 0:\; q_\lambda(x)\leq \alpha\}$ is bounded. Then  $\textsf{Argmax}\; Q_n$ is non-empty.
\end{prop}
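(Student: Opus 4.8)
The plan is to establish existence via the Weierstrass extreme value theorem. Identifying $\M^{(n)}$ with $\rset^{d_n}$, I would show that $Q_n$ is continuous and that one of its superlevel sets is nonempty and compact, so that $Q_n$ attains its supremum there and hence over all of $\M^{(n)}$. The two hypotheses play complementary roles: the uniform bound $f^{(s)}_\theta\le c(s)$ caps $\bar\ell_n$ from above, while the boundedness of the sublevel sets of $q_{\lambda_n}$ forces the penalty to be coercive.

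\textbf{Continuity.} First I would check that $Q_n$ is continuous on $\rset^{d_n}$. For each $s\in D_n$ and each fixed $(u,x_{\partial_n s})$, the map $\theta\mapsto f^{(s)}_\theta(u\mid x_{\partial_n s})$ is continuous: its numerator $\exp(A(u)+\theta(s,s)B_0(u)+\sum_{\ell\in\partial_n s}\theta(s,\ell)B(u,x_\ell))$ depends continuously (indeed smoothly) on $\theta$, and by (\ref{boundBC}) together with the compactness of $\Xset$ and finiteness of $\rho$, the normalizing constant $Z^{(s)}_{n,\theta}$ is finite, bounded away from $0$, and continuous in $\theta$. Thus each $\log f^{(s)}_\theta$ is continuous and so is $\bar\ell_n$. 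For the penalty, A\ref{A0} makes $q_{\lambda_n}$ right-continuous at $0$ with $q_{\lambda_n}(0)=0$ and differentiable, hence continuous, on $(0,\infty)$; composing with $x\mapsto|x|$ shows each $\theta\mapsto q_{\lambda_n}(|\theta(s,\ell)|)$ is continuous, and therefore $Q_n$ is continuous.

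\textbf{Compact superlevel set.} By the first hypothesis, $\log f^{(s)}_\theta(X^{(i)}_s\mid X^{(i)}_{\partial_n s})\le\log c(s)$ for every $\theta$, so $\bar\ell_n(\theta)\le C_0\eqdef n\sum_{s\in D_n}\log c(s)<\infty$ uniformly in $\theta$. Set $\gamma\eqdef Q_n(0)$, which is finite since $\bar\ell_n(0)$ is a finite sum of logarithms of the positive quantities $\exp(A(X^{(i)}_s))/\int_\Xset\exp(A(u))\rho(du)$ while the penalty vanishes at $0$. Consider $S\eqdef\{\theta\in\rset^{d_n}:\;Q_n(\theta)\ge\gamma\}$, which contains $0$ and is closed by continuity. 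For any $\theta\in S$ one has $\sum_{(s,\ell)\in\underline{D}_n^2}q_{\lambda_n}(|\theta(s,\ell)|)\le\bar\ell_n(\theta)-\gamma\le C_0-\gamma\eqdef\alpha_0$; since the summands are nonnegative, each obeys $q_{\lambda_n}(|\theta(s,\ell)|)\le\alpha_0$, so by the second hypothesis $|\theta(s,\ell)|$ lies in the bounded set $\{x\ge0:\;q_{\lambda_n}(x)\le\alpha_0\}$. Hence $S$ is bounded, and therefore compact.

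\textbf{Conclusion.} A continuous function on the nonempty compact set $S$ attains its maximum at some $\theta^\star\in S$, and since $Q_n(\theta)<\gamma\le Q_n(\theta^\star)$ for every $\theta\notin S$, this $\theta^\star$ maximizes $Q_n$ over all of $\M^{(n)}$, giving $\textsf{Argmax}\,Q_n\neq\emptyset$. The only step needing genuine care is the compactness of $S$: the whole argument hinges on converting the coordinatewise bounds $q_{\lambda_n}(|\theta(s,\ell)|)\le\alpha_0$ into a uniform bound on $\|\theta\|$, which is precisely what the bounded-sublevel-set hypothesis delivers, using crucially that $\underline{D}_n^2$ is finite so that finitely many coordinatewise bounds suffice.
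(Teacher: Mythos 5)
Your proof is correct and follows essentially the same route as the paper's: both consider the superlevel set at the level $Q_n(\mathsf{0})$, bound the penalty terms there by $n\sum_{s\in D_n}\log c(s)-Q_n(\mathsf{0})$ using the uniform density bound, invoke the bounded-sublevel-set hypothesis on $q_\lambda$ to get compactness, and conclude by Weierstrass. The only difference is that you spell out the continuity of $\bar\ell_n$ and of the penalty (which the paper asserts without detail), which is a welcome but not substantively different addition.
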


\begin{rem}
The result is not always useful. It applies to the $\ell^1$ penalty but not to the SCAD penalty. If $\Xset$ is finite as in all the examples below, then $f^{(s)}_\theta(\cdot\vert x_{\partial_n s})$ is a finite probability mass function. Therefore the assumption of the proposition holds with $c(s)=1$.
\end{rem}

\begin{proof}
 Fix a sample path $\omega\in\Pi$. Then $Q_n$ is a continuous $\rset$-valued function on $\M^{(n)}$. Denote $\textsf{0}$ the null element of $\M^{(n)}$, and $r=Q_n(\textsf{0})$. Then $\L_r\eqdef\{\theta\in\M^{(n)}:\; Q_n(\theta)\geq r\}$  is nonempty and closed by continuity of $Q_n$. Under the assumption of the proposition, if $\theta\in\L_r$, then for any $(s,\ell)\in\underline{D}_n^2$:
 \[q_{\lambda_n}(|\theta(s,\ell)|)\leq \sum_{(s,\ell)\in \underline{D}_n^2}q_{\lambda_n}(|\theta(s,\ell)|)\leq n\sum_{s\in D_n}\log c(s)-r.\]
  Thus $\L_r$ is a compact subset of $\M^{(n)}$ and $Q_n$ attains it maximum at $\hat\theta_n\in\L_r$. 
\end{proof}

\subsection{Consistency and rate of convergence}\label{sec:asympdist} 
Let $\M_1$ be the separable Banach space of all  $\theta\in\M$ such that $\|\theta\|_1\eqdef \sum_{(u,v)\in\underline{\S}^2}|\theta(u,v)|<\infty$. We  investigate the consistency of $\hat\theta_n$ as a random element of $\M_1$ under the following sparsity assumption.

\vspace{0.2cm}
\debutA
\item \label{A1} $\theta_\star\in \M$  and for any $s\in\S$, the $\theta_\star$-neighborhood of $s$ (that is, the set $\partial_{\theta_\star} s=\{\ell\in\S\setminus\{s\}:\;\theta_\star(s,\ell)\neq 0\}$) is a finite set.
\finA

\vspace{0.2cm}
A\ref{A1} guarantees that for $\theta\in\M_1$, $\theta+\theta_\star\in\M$, so that the full conditional densities $f^{(s)}_{\theta+\theta_\star}(u\vert x_{\S\setminus \{s\}})$ are well defined. For two matrices $\theta,\theta'\in\M$, we write $\theta\cdot\theta'$ to denote the component-wise product. And if $\theta\in\M$, and $n\geq 1$, $\theta^{(n)}$ denotes the element of $\M^{(n)}$ such that $\theta^{(n)}(u,v)=\theta(u,v)$ if $(u,v)\in D_n\times D_n$ (and $\theta^{(n)}(u,v)=0$ otherwise). We introduce
\begin{multline}\label{fUn}
U_n(\theta)\eqdef n^{-1}\sum_{i=1}^n\sum_{s\in D_n} \left(\log f^{(s)}_{\theta_\star}(X^{(i)}_s\vert X^{(i)}_{\partial_n s})-\log f^{(s)}_{\theta_\star+\theta}(X^{(i)}_s\vert X^{(i)}_{\partial_n s})\right)\\
+n^{-1}\sum_{(s,\ell)\in\underline{D}_n^2}\left(q_{\lambda_n}(|\theta_\star(s,\ell)+\theta(s,\ell)|)-q_{\lambda_n}(|\theta_\star(s,\ell)|)\right),\;\;\theta\in\M_1.\end{multline}
$U_n(\theta)$ is no other than $n^{-1}\left(Q_n(\theta_\star)-Q_n(\theta_\star+\theta)\right)$ and is minimized at $\hat\theta_n-\theta_\star^{(n)}$. We also introduce the conditional Kulback-Leibler divergence function
\begin{equation}\label{kstheta}
k^{(s)}(\theta_\star,\theta)\eqdef\PE_{\theta_\star}\left(\int-\log\left(\frac{f_{\theta_\star+\theta}^{(s)}(u\vert X_{\S\setminus\{s\}})}{f_{\theta_\star}^{(s)}(u\vert X_{\S\setminus\{s\}})}\right)f_{\theta_\star}^{(s)}(u\vert X_{\S\setminus\{s\}})du\right),\;\;\theta\in\M_1.\end{equation}
By the concavity of the logarithm function, $k^{(s)}(\theta_\star,\theta)\geq 0$. Finally, we define
\begin{equation}\label{ktheta}
k_n(\theta_\star,\theta)\eqdef \sum_{s\in D_n}k^{(s)}(\theta_\star,\theta),\;\mbox{ and }\;\;k(\theta_\star,\theta)\eqdef \sum_{s\in\S}k^{(s)}(\theta_\star,\theta).\end{equation}
Clearly, $k_n(\theta_\star,\theta)$ is nondecreasing in $n$ and converges to $k(\theta_\star,\theta)$. For any $\theta\in\M_1$ and $u\in\Xset$, we use (\ref{Id:PS}) and (\ref{boundBC}) to verify that
\begin{multline*}
\left|\log f^{(s)}_{\theta_\star+\theta}(u\vert X_{\S\setminus\{s\}})-\log f^{(s)}_{\theta_\star}(u\vert X_{\S\setminus\{s\}})\right|\leq \\
\left|\theta(s,s)B_0(u) + \sum_{\ell\in\S\setminus\{s\}}\theta(s,\ell)B(u,X_\ell)\right|
 +\left|\log Z^{(s)}_{n,\theta+\theta_\star}-\log Z^{(s)}_{n,\theta_\star}\right| \\\leq C\left(|\theta(s,s)|+\sum_{\ell\in\partial_\theta s}|\theta(s,\ell)|\right),\end{multline*}
for some finite constant $C$. This  implies that $\sum_{s\in D_n} k^{(s)}(\theta_\star,\theta)\leq C\|\theta\|_1<\infty$ and proves that $k(\theta_\star,\theta)$ is finite.  Also notice that $\textsf{Argmin}\; k(\theta_\star,\cdot)$ is nonempty and contains the null matrix $\textsf{0}$.

We study the consistency of $\hat\theta_n$ using epi-convergence methods as in \cite{hess96}. We review some definitions. For more on epi-convergence, we refer to \cite{dalmaso93}. Let $(\Vset,\dist)$ be a metric space and $\{f,f_n,\;n\geq 1\}$ be a sequence of functions defined on $\Vset$, and taking values in $\rset\cup\{-\infty,+\infty\}$. The epi-limit inferior of $\{f_n,\;n\geq 1\}$ is the function
\[\textsf{li}_e f_n(x)=\sup_{k\geq 1}\;\liminf_{n\to\infty}\inf _{v\in B(x,k^{-1})} f_n(v),\]
where $B(x,\epsilon)$ denotes the open ball of $\Vset$ with center $x$ and radius $\epsilon$. We define similarly the epi-limit superior of $\{f_n,\;n\geq 1\}$ as
\[\textsf{ls}_e f_n(x)=\sup_{k\geq 1}\;\limsup_{n\to\infty}\inf _{v\in B(x,k^{-1})} f_n(v).\]
We say that $f_n$ epi-converges (or $\Gamma$-converges)  to $f$ if $\textsf{ls}_e f_n(x)\leq f(x)\leq \textsf{li}_e f_n(x)$ for all $x\in\Vset$. 

\begin{theo}\label{thm2}
Assume A\ref{A0}-\ref{A1}, (\ref{boundBC}) and suppose that $n^{-1}\lambda_n=o(1)$, as $n\to\infty$. Then almost surely, $U_n$ epi-converges to $k(\theta_\star,\cdot)$ in $(\M_1,\|\cdot\|_1)$.
\end{theo}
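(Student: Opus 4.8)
The plan is to reduce the epi-convergence of $U_n$ to a statement about \emph{continuous} convergence of the pseudo-likelihood part, after peeling off the penalty as a vanishing perturbation. First I would split $U_n=W_n+R_n$, where $W_n(\theta)$ is the double pseudo-likelihood sum in (\ref{fUn}) and $R_n(\theta)$ is the normalized penalty difference. Since $q_\lambda(0)=0$ and, by A\ref{A0}, $|q'_\lambda|\le C\lambda$ on $(0,\infty)$, $q_\lambda$ is $C\lambda$-Lipschitz on $[0,\infty)$; hence $|R_n(\theta)|\le Cn^{-1}\lambda_n\sum_{(s,\ell)\in\underline{D}_n^2}|\theta(s,\ell)|\le Cn^{-1}\lambda_n\|\theta\|_1$, which tends to $0$ uniformly on $\|\cdot\|_1$-bounded sets because $n^{-1}\lambda_n=o(1)$. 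Epi-convergence is stable under such uniformly (indeed continuously) vanishing perturbations, so it suffices to show $W_n$ epi-converges to $k(\theta_\star,\cdot)$; this is what lets the argument cover non-convex penalties such as SCAD, even though $U_n$ itself need not be convex.

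Next I would record two structural facts about $W_n$. It is convex on $\M_1$: the $\theta_\star$-term is constant in $\theta$, the remaining terms in $\log f^{(s)}_{\theta_\star+\theta}$ are affine in $\theta$, and each $\log Z^{(s)}_{n,\theta_\star+\theta}$ is a log-partition function, hence convex; the same computation shows each $k^{(s)}(\theta_\star,\cdot)$, and thus $k(\theta_\star,\cdot)$, is convex. Moreover, the log-ratio bound already derived in the excerpt, summed over $s\in D_n$, gives $\sum_{s\in D_n}|\log f^{(s)}_{\theta_\star}(X_s|X_{\partial_n s})-\log f^{(s)}_{\theta_\star+\theta}(X_s|X_{\partial_n s})|\le C\sum_s(|\theta(s,s)|+\deg(s,\theta))\le 2C\|\theta\|_1$, uniformly in $n$ and in the sample path, so $|W_n(\theta)|\le 2C\|\theta\|_1$ for all $n$ and $\omega$. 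Since a convex function bounded on a ball is Lipschitz on a smaller concentric ball with a constant controlled by that bound, this makes $\{W_n\}$ equi-Lipschitz on every bounded subset of $\M_1$.

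The core probabilistic step is pointwise a.s.\ convergence of $W_n$ on a fixed countable dense set $\mathcal{D}\subset\M_1$ (which exists by separability). Fix $\theta\in\mathcal{D}$ and put $H_n(X)=\sum_{s\in D_n}(\log f^{(s)}_{\theta_\star}(X_s|X_{\partial_n s})-\log f^{(s)}_{\theta_\star+\theta}(X_s|X_{\partial_n s}))$, so $W_n(\theta)=n^{-1}\sum_{i=1}^n H_n(X^{(i)})$. For fixed $n$ these are i.i.d.\ and bounded by $2C\|\theta\|_1$, so Hoeffding gives $\cPP_\star(|W_n(\theta)-\cPE_\star[H_n]|>\eps)\le 2\exp(-cn\eps^2/\|\theta\|_1^2)$, which is summable, and Borel--Cantelli yields $W_n(\theta)-\cPE_\star[H_n]\to 0$ a.s. It remains to identify $\lim_n\cPE_\star[H_n]$. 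As $\partial_n s\uparrow\S\setminus\{s\}$, the approximate conditionals (\ref{approxfullcond}) converge to the exact ones (\ref{fullcond}) for every sample path, since the neighborhood sums converge absolutely (using $\theta_\star+\theta\in\M$ and $\|B\|_\infty<\infty$) and $Z^{(s)}_{n,\theta}\to Z^{(s)}_{\theta}$ by dominated convergence; hence each summand converges to the integrand of $k^{(s)}(\theta_\star,\theta)$ in (\ref{kstheta}), and being uniformly bounded by $C(|\theta(s,s)|+\deg(s,\theta))$ its mean converges to $k^{(s)}(\theta_\star,\theta)$. Because these bounds are summable in $s$ and independent of $n$, dominated convergence for series (Tannery's theorem) lets me interchange the limit with the sum over $s$, giving $\cPE_\star[H_n]\to k(\theta_\star,\theta)$ and hence $W_n(\theta)\to k(\theta_\star,\theta)$ a.s. Intersecting the countably many probability-one events produces a single event of full probability on which this holds for every $\theta\in\mathcal{D}$.

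On that event I would conclude as follows. Equi-Lipschitzness on bounded sets together with convergence on the dense set $\mathcal{D}$ forces $W_n(\theta)\to k(\theta_\star,\theta)$ at \emph{every} $\theta\in\M_1$ and upgrades this to continuous convergence: if $\theta_n\to\theta$ then $|W_n(\theta_n)-k(\theta_\star,\theta)|\le L\|\theta_n-\theta\|_1+|W_n(\theta)-k(\theta_\star,\theta)|\to 0$. Adding back $R_n\to 0$ (also continuously) gives that $U_n\to k(\theta_\star,\cdot)$ continuously, and continuous convergence implies epi-convergence (the liminf inequality holds with equality along every sequence, and the constant sequence is a recovery sequence), which is the assertion of Theorem \ref{thm2}; this is the convex epi-convergence mechanism of \cite{hess96,dalmaso93}. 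I expect the main obstacle to be the triangular-array nature of the law of large numbers above: the summand $H_n$ depends on $n$ through both the sample size and the truncation $\partial_n s$, so no ordinary strong law applies, and the argument must lean on the uniform bound $|H_n|\le 2C\|\theta\|_1$ (to run Hoeffding and Borel--Cantelli) and on the absolute summability of the per-site bounds (to pass the limit through the infinite sum over $s$ when computing the mean). The secondary subtlety is the passage from pointwise convergence on a dense set to epi-convergence in the infinite-dimensional space $\M_1$, which is exactly where convexity and the resulting equi-Lipschitz control are indispensable.
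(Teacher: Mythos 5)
Your proof is correct, and its outer skeleton (peeling off the penalty via the bound $|R_n(\theta)|\le c\,n^{-1}\lambda_n\|\theta\|_1$ from A\ref{A0} and the mean value theorem) is exactly the paper's first step, formalized there as Lemma \ref{lem1}; but the core epi-convergence argument is genuinely different. The paper follows the machinery of \cite{hess96}: it verifies the uniform Lipschitz conditions (\ref{lipcond1})--(\ref{lipcond2}) via (\ref{Id:PS}) and (\ref{boundBC}), proves the pointwise-in-$x$ convergence (\ref{unifconv}) of $g_n(x,\cdot)$ to $g(x,\cdot)$ using Lemma \ref{lemlip} and a finite truncation set $\Lambda_\eps$ (the sample-path analogue of your Tannery step, which you need only at the level of means), and then feeds all this into Propositions \ref{prop10}--\ref{prop12}, where the epi-liminf and epi-limsup are controlled separately through the Lipschitz approximations $g^{(k)}(x,u)=\inf_{v}\{g(x,v)+k\dist(u,v)\}$, a law of large numbers for triangular arrays, and Fatou's lemma. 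You instead exploit equi-Lipschitzness to prove the stronger statement of \emph{continuous} convergence: Hoeffding plus Borel--Cantelli on a countable dense subset of the separable space $\M_1$ (a correct replacement for the paper's array LLN, legitimate because $|H_n|\le 2C\|\theta\|_1$ uniformly in $n$), identification of $\lim_n\cPE_\star[H_n]=k(\theta_\star,\theta)$ by bounded convergence per site, the tower property, and dominated convergence for the series over $s$, and finally the elementary observation that, under the paper's definitions of $\textsf{li}_e$ and $\textsf{ls}_e$, pointwise convergence of an equi-Lipschitz sequence already yields epi-convergence. Both routes are valid: the paper's buys generality (the $g^{(k)}$/Fatou device for the epi-liminf survives situations where only one-sided or non-equi-Lipschitz control is available, and the propositions are reusable as stated), while yours is more elementary, self-contained, and delivers the strictly stronger conclusions of continuous convergence and explicit exponential concentration. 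One small remark: your convexity detour is sound ($\log Z^{(s)}_{n,\theta_\star+\theta}$ is a log-partition function and affine precomposition preserves convexity) but superfluous, since (\ref{Id:PS}) and (\ref{boundBC}) already give the global bound $\sup_{n}\sup_{x}|g_n(x,\theta)-g_n(x,\theta')|\le C\|\theta-\theta'\|_1$ directly --- the very estimate the paper records when checking (\ref{lipcond1}).
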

\begin{proof}
See Section \ref{proofthm2}.
\end{proof}

Epi-convergence is a very useful tool in the study of minimizers. The key result in that respect is as follows (using the notations of the above paragraph). If $f_n$ epi-converges to $f$ and $\{x_n,\;n\geq 1\}$ is such that $x_n\in \textsf{Argmin}\,f_n$, then if $x_n\to \bar x$ (in the metric space $(\Vset,\dist)$), $\bar x\in \textsf{Argmin}\,f$.  In order to make use of this result in our case, we need to impose additional conditions that ensure that $\hat\theta_n$ converges and that the limiting function $k(\theta_\star,\cdot)$ admits a unique minimum.  

For $s\in\S$ and $\theta\in\M$, define  the infinite matrix $\rho_\theta^{(s)}\eqdef\{\rho_\theta^{(s)}(\ell,\ell'),\;\ell,\ell'\in \S\}$, where 
\[\rho_\theta^{(s)}(\ell,\ell')\eqdef \PE_\star\left[\textsf{Cov}_\theta\left( B(X_s,X_\ell),B(X_s,X_{\ell'})\vert X_{\partial_\theta s}\right)\right],\;\;\ell,\ell'\in\S.\]

%
%

\begin{coro}\label{coro1}
Suppose that the assumptions of Theorem \ref{thm2} hold and also that for any $\theta\in\M$, any $s\in\S$, $\rho_\theta^{(s)}$ is a positive definite matrix. Let $\{\hat\theta_n,\;n\geq 1\}$ be a Borel measurable sequence of $\M_1$ such that $\hat\theta_n\in \textsf{Argmax}\, Q_n$. If  $\{(\hat\theta_n-\theta_\star^{(n)}),\;n\geq 1\}$ is uniformly tight, as a random sequence of $\M_1$, then $\|\hat\theta_n-\theta_\star^{(n)}\|_1$ converges in probability to zero.
\end{coro}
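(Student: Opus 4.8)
The plan is to combine the almost-sure epi-convergence of Theorem \ref{thm2} with the tightness hypothesis, the bridge being that the positive-definiteness assumption forces $\textsf{0}$ to be the \emph{unique} minimizer of $k(\theta_\star,\cdot)$. Write $\eta_n\eqdef\hat\theta_n-\theta_\star^{(n)}$ and recall that $\eta_n$ minimizes $U_n$ over $\M_1$ while $U_n(\textsf{0})=0$, so $U_n(\eta_n)\leq 0$ for every $n$. The goal is then to show $\cPP_\star(\|\eta_n\|_1\geq\epsilon)\to 0$ for each $\epsilon>0$.

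First I establish uniqueness of the minimizer. Up to the outer expectation $\PE_\star$, each $k^{(s)}(\theta_\star,\cdot)$ is the Kullback-Leibler divergence between two members of the exponential family (\ref{approxfullcond}); differentiating twice in $\theta$ identifies its Hessian at $\theta$ with a conditional-covariance matrix of the form $\rho_{\theta_\star+\theta}^{(s)}$, which is positive definite by hypothesis (this is why the assumption is imposed at every $\theta\in\M$, not just at $\theta_\star$). Hence each $k^{(s)}(\theta_\star,\cdot)$ is strictly convex in the $s$-th row of its argument, and since any $\theta\neq\theta'$ differ in some row, the sum $k(\theta_\star,\cdot)=\sum_s k^{(s)}(\theta_\star,\cdot)$ is strictly convex on $\M_1$. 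As already noted, $\textsf{0}\in\textsf{Argmin}\,k(\theta_\star,\cdot)$ with value $0$, so strict convexity yields $\textsf{Argmin}\,k(\theta_\star,\cdot)=\{\textsf{0}\}$. A more elementary route, should the differentiation be delicate, is to argue directly that $k^{(s)}(\theta_\star,\theta)=0$ forces $f^{(s)}_{\theta_\star+\theta}=f^{(s)}_{\theta_\star}$ a.s., i.e.\ $\sum_\ell\theta(s,\ell)B(\cdot,X_\ell)$ is a.s.\ constant in its first argument, which positive-definiteness of $\rho^{(s)}$ excludes unless the $s$-th row of $\theta$ vanishes.

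Next comes the deterministic engine. On the almost-sure event $\Omega_0$ where $U_n$ epi-converges to $k(\theta_\star,\cdot)$, the epi-liminf inequality gives $\liminf_n U_n(v_n)\geq k(\theta_\star,v)$ whenever $v_n\to v$; a standard near-infimizer extraction (pick near-minimizers of $U_n$ in a compact set, extract a convergent subsequence, apply the liminf inequality) upgrades this, for any compact $K'\subseteq\M_1$, to $\liminf_n\inf_{K'}U_n\geq\inf_{K'}k(\theta_\star,\cdot)$. Now fix $\epsilon>0$ and $\gamma>0$; by tightness choose a compact $K\subseteq\M_1$ with $\cPP_\star(\eta_n\notin K)\leq\gamma$ for all $n$, and set $K'\eqdef K\cap\{\theta:\|\theta\|_1\geq\epsilon\}$, which is compact and omits $\textsf{0}$. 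Since $k(\theta_\star,\cdot)$ is continuous (finite and convex, with $k(\theta_\star,\theta)\leq C\|\theta\|_1$) and $\textsf{0}$ is its unique minimizer, $m\eqdef\inf_{K'}k(\theta_\star,\cdot)>0$. Thus on $\Omega_0$ there is a finite (measurable) index $N$ with $\inf_{K'}U_n\geq m/2>0$ for all $n\geq N$; combined with $U_n(\eta_n)\leq 0$ this forces $\eta_n\notin K'$ once $n\geq N$, i.e.\ $\{\eta_n\in K\}\subseteq\{\|\eta_n\|_1<\epsilon\}$.

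Assembling the pieces, for every $n$,
\[\cPP_\star(\|\eta_n\|_1\geq\epsilon)\leq\cPP_\star(\eta_n\notin K)+\cPP_\star(N>n)+\cPP_\star(\Omega_0^c)\leq\gamma+\cPP_\star(N>n),\]
and letting $n\to\infty$ (so $\cPP_\star(N>n)\to 0$ since $N<\infty$ a.s.) gives $\limsup_n\cPP_\star(\|\eta_n\|_1\geq\epsilon)\leq\gamma$; as $\gamma$ is arbitrary, $\|\eta_n\|_1\to 0$ in probability. The main obstacle is precisely this fusion of a pathwise (almost-sure) statement with a distributional (tightness) one: epi-convergence controls $\eta_n$ only along sample-path-convergent subsequences, and it is tightness that supplies the compact set on which the uniform lower bound $\inf_{K'}U_n\geq m/2$ can take effect. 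Secondary care is needed to verify the measurability of $N$ and to confirm that the positive-definiteness hypothesis delivers genuine strict convexity rather than mere convexity.
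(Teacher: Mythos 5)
Your proof is correct, and its skeleton coincides with the paper's: first show that $\textsf{0}$ is the \emph{unique} minimizer of $k(\theta_\star,\cdot)$ using positive definiteness of the matrices $\rho_\theta^{(s)}$, then combine uniform tightness with the almost-sure epi-convergence of Theorem \ref{thm2} to localize $\eta_n=\hat\theta_n-\theta_\star^{(n)}$, exploiting $U_n(\eta_n)\leq U_n(\textsf{0})=0$. The differences lie in the execution of both steps. For uniqueness, the paper does not differentiate at all: it applies Lemma \ref{lemlip} twice to get the exact second-order representation $k^{(s)}(\theta_\star,\theta)=\int_0^1 t\,dt\int_0^1 d\tau \sum_{\ell,\ell'}\theta(s,\ell)\theta(s,\ell')\rho^{(s)}_{\theta_\star+t\tau\theta}(\ell,\ell')$, from which $k^{(s)}(\theta_\star,\theta)=0\Leftrightarrow\theta(s,\cdot)=0$ is immediate; your primary route (identifying the Hessian with $\rho^{(s)}_{\theta_\star+\theta}$ and invoking row-wise strict convexity) is the differential version of the same computation — note the Hessian involves the full conditionals (\ref{fullcond}), not the truncated ones (\ref{approxfullcond}) — and your fallback argument via $f^{(s)}_{\theta_\star+\theta}=f^{(s)}_{\theta_\star}$ a.s.\ is essentially what the integral identity encodes. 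For the stochastic step, the paper bounds $\cPP_\star(\|\eta_n\|_1>\epsilon)$ by $\epsilon$ plus the probability of the event $\{\eta_n\in\compact,\ \|\eta_n\|_1>\epsilon\ \text{infinitely often}\}$, and kills the latter by citing Corollary 7.20 of \cite{dalmaso93} (epi-convergence forces subsequential limits of minimizers into $\textsf{Argmin}\,k=\{\textsf{0}\}$, contradicting $\|\bar\theta\|_1\geq\epsilon$ on the compact); you instead re-derive the needed consequence of epi-convergence directly, namely $\liminf_n\inf_{K'}U_n\geq\inf_{K'}k(\theta_\star,\cdot)>0$ on the compact $K'=K\cap\{\|\theta\|_1\geq\epsilon\}$, so that $\eta_n$ must eventually leave $K'$. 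The two are logically equivalent uses of the sequential characterization of epi-convergence on compacta, but your version is self-contained, makes explicit the measurability of the random threshold $N$ and the continuity and positive infimum of $k(\theta_\star,\cdot)$ on $K'$ (both of which the paper uses tacitly), and isolates precisely where tightness enters; the paper's version is shorter at the cost of the external citation.
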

\begin{proof}
See Section \ref{proofcoro1}.
\end{proof}

The tightness condition is needed but in general is difficult to check. Intuitively,  the tightness of $\{(\hat\theta_n-\theta_\star^{(n)}),\;n\geq 1\}$ implies that the overall dependence between the missing nodes and the observed nodes is limited. We will not attempt to make this statement precise.  We will rather study more precisely the connection between the missing nodes and the rate of convergence of $\hat\theta_n$.  We assume that the following holds (see Section \ref{assumB2} for a discussion).

\vspace{0.2cm}

\debutA
\item \label{B2} Assume that there exist $\alpha_n,\alpha_n'>0$ such that for  all $\theta,\theta'\in\M^{(n)}$,
\begin{multline*}
\sum_{s\in D_n}\PE_{\theta_\star}\left[\textsf{Var}_{\theta'}\left(\sum_{\ell\in D_n}\theta(s,\ell)B(X_s,X_\ell)\vert X_{\partial_n s}\right)\right] \geq \alpha_n\|\theta\|_2^2,\\
 \mbox{ and }\;\;\; \PE_\star^{1/2}\left[\left(\sum_{s\in D_n}\log\left(\frac{f_{\theta^{(n)}_\star+\theta}^{(s)}\left(X_s^{(1)}\vert X_{\partial_n s}^{(1)}\right)}{f_{\theta^{(n)}_\star}^{(s)}\left(X_s^{(1)}\vert X_{\partial_n s}^{(1)}\right)}\right)\right)^2\right]\leq \alpha_n'\|\theta\|_2,\end{multline*}
for all $n$ large enough.
\finA

\vspace{0.2cm}

Let $\{a_n,\;n\geq 1\}$ be a sequence of positive numbers. We define $\M^{(n)}(a_n)$ the set of all finite $p_n\times p_n$ symmetric matrix $\theta$  such that
\[\left|\left\{(s,\ell)\in\underline{D}_n^2:\; |\theta(s,\ell)|>0\right\}\right|\leq a_n.\]
In other words,  $\M^{(n)}(a_n)$ is the set of elements of $\M^{(n)}$ with sparsity $a_n$. We introduce $\Delta_n^{(c)}\eqdef \{s\in D_n:\; \partial s\setminus D_n \neq \emptyset\}$, the set of observed nodes that admit neighbors outside $D_n$. We can think of $\Delta_n^{(c)}$ as the boundary of $D_n$. Let $\{\tau_n,\;n\geq 1\}$ be another sequence of positive numbers.  We define $\M^{(n)}(a_n,\tau_n)$ as the set of all $\theta\in \M^{(n)}(a_n)$  such that
\[\left\{\sum_{s\in \Delta_n^{(c)}}\left(\sum_{\ell\in D_n} |\theta(s,\ell)|\right)^2\right\}^{1/2}\leq \tau_n \left\{\sum_{(s,\ell)\in \underline{D}_n^2}|\theta(s,\ell)|^2\right\}^{1/2}.\]

We relate the behavior of the estimator $\hat\theta_n$, to the class of functions $\F_{n,\delta}\eqdef \{m_{n,\theta},\;\theta\in B_{n,\delta}\}$, where $B_{n,\delta}\eqdef \{\theta\in \M^{(n)}(a_n,\tau_n):\;\|\theta\|_2\leq \delta\}$, $\delta>0$, and
\begin{equation}\label{defmn}
m_{n,\theta}(x)=\sum_{s\in D_n}\log \left(\frac{f^{(s)}_{\theta_\star}(x_s\vert x_{\partial_n s})}{f^{(s)}_{\theta_\star+\theta}(x_s\vert x_{\partial_n s})}\right),\;\; x\in\Xset^\infty.\end{equation}
It is clear that the size of the family $\F_{n,\delta}$ depends on the size of $B_{n,\delta}$. By the sparsity of $\M^{(n)}(a_n,\tau_n)$, and for $a_n\leq d_n/ 2$ (we recall that $d_n=p_n(p_n+1)/2$, where $p_n=|D_n|$), we have
\begin{equation}\label{coveringnumber}
N(\epsilon,B_{n,\delta},\|\cdot\|_{2})\leq \left(\frac{c\delta d_n}{\epsilon a_n}\right)^{a_n},\end{equation}
for some universal constant $c$, where  $N(\epsilon,B_{n,\delta},\|\cdot\|_{2})$ denotes the $\epsilon$-covering number of the set $B_{n,\delta}$ with respect to the $\ell^2$-norm on $\M_n(a_n,\tau_n)$. To see this, notice that the $\epsilon$-covering number of the $\ell^2$-ball of  $\rset^{a_n}$ with radius $\delta$ is bounded from above by $(3\delta/\epsilon)^{a_n}$. 
 For $\theta\in\M^{(n)}(a_n,\tau_n)$, since the number of non-zeros entries of $\theta$ is bounded from above by $a_n$, there are  at most ${d_n \choose a_n}$  ways of forming $\theta$ from a sequence of $a_n$ non-zeros elements of $\rset^{a_n}$. Thus $N(\epsilon,B_{n,\delta},\|\cdot\|_{2})\leq{d_n\choose a_n}(3\delta/\epsilon)^{a_n}$. By Stirling's formula, 
 \begin{multline*}
 {d_n\choose a_n}\leq \sqrt{\frac{2c}{a_n}}\exp\left(-a_n\log(a_n/d_n) -(d_n-a_n)\log(1-a_n/d_n)\right)\\
 \leq \exp\left(a_n\left(\log(d_n)-\log(a_n) +c\right)\right),\end{multline*}
for some finite constant $c$,  which leads to (\ref{coveringnumber}). See also \cite{roman09}. Finally we introduce
\[b_n\eqdef \left\{\sum_{s\in D_n}\left(\sum_{\ell\in \partial s\setminus D_n}|\theta_\star(s,\ell)|\right)^2\right\}^{1/2},\]
 which measure the strength of the dependence between the missing nodes and the observed nodes. Our main result is as follows.
  
\begin{theo}\label{thm3}
Assume (\ref{boundBC}), A\ref{B2}. Suppose that as $n\to\infty$, $a_n\sqrt{\log p_n}=O(\alpha_n'n^{1/2})$, and $\lambda_n=O(\sqrt{n\log p_n})$, and also that 
$(\hat\theta_n-\theta_\star^{(n)})\in \M_n(a_n,\tau_n)$ for all $n$ large enough. Then $r_n\|\hat\theta_n-\theta_\star^{(n)}\|_2=O_p(1)$ as $n\to\infty$, where $r_n=\alpha_n\sqrt{n}/\left(\sqrt{a_n\log p_n}+ \sqrt{n}b_n\tau_n\right)$. 
\end{theo}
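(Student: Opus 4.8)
The plan is to run a basic-inequality/localization argument for penalized M-estimators, isolating the effect of the missing nodes in a single bias term. Set $\hat\delta_n\eqdef\hat\theta_n-\theta_\star^{(n)}$ and write $\mathbb{P}_n\phi\eqdef n^{-1}\sum_{i=1}^n\phi(X^{(i)})$. Since $\hat\theta_n$ maximizes $Q_n$ over $\M^{(n)}\ni\theta_\star^{(n)}$ and $U_n(\theta)=n^{-1}(Q_n(\theta_\star^{(n)})-Q_n(\theta_\star^{(n)}+\theta))$ is minimized at $\hat\delta_n$, we have the basic inequality $U_n(\hat\delta_n)\le U_n(\textsf{0})=0$, which by (\ref{fUn}) and (\ref{defmn}) reads
\[
\mathbb{P}_n m_{n,\hat\delta_n}+n^{-1}\sum_{(s,\ell)\in\underline{D}_n^2}\left(q_{\lambda_n}(|\theta_\star(s,\ell)+\hat\delta_n(s,\ell)|)-q_{\lambda_n}(|\theta_\star(s,\ell)|)\right)\le 0.
\]
First I would split $\mathbb{P}_n m_{n,\theta}=\cPE_\star[m_{n,\theta}(X^{(1)})]+(\mathbb{P}_n-\cPE_\star)m_{n,\theta}$ and decompose the population term as $\cPE_\star[m_{n,\theta}(X^{(1)})]=\bar k_n(\theta)+R_n(\theta)$, where $\bar k_n(\theta)$ is the conditional Kullback--Leibler divergence of the \emph{approximate} model, obtained by integrating the approximate log-ratio against $f^{(s)}_{\theta_\star}(\cdot\mid X_{\partial_n s})$, and $R_n(\theta)$ collects the discrepancy due to the true conditional law of $X_s$ given $X_{\partial_n s}$ being the marginal of $f^{(s)}_{\theta_\star}(\cdot\mid X_{\S\setminus\{s\}})$ over the unobserved nodes rather than $f^{(s)}_{\theta_\star}(\cdot\mid X_{\partial_n s})$. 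The theorem then follows from a curvature lower bound on $\bar k_n$, an upper bound on $R_n$, a bound on the penalty increment, and a uniform bound on the centered empirical process.

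For the curvature I would Taylor-expand $\bar k_n$ in the direction $\theta$: each $f^{(s)}_\vartheta(\cdot\mid x_{\partial_n s})$ is an exponential family in the row $\vartheta(s,\cdot)$ with sufficient statistics $B(u,x_\ell)$, so the conditional KL is a Bregman divergence whose second-order term at an intermediate point $\theta'\in\M^{(n)}$ equals $\tfrac12\PE_{\theta_\star}[\textsf{Var}_{\theta'}(\sum_{\ell\in D_n}\theta(s,\ell)B(X_s,X_\ell)\mid X_{\partial_n s})]$; summing over $s\in D_n$ and using the first inequality of A\ref{B2} gives $\bar k_n(\theta)\ge\tfrac{\alpha_n}{2}\|\theta\|_2^2$. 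For the bias, the key point is that $f^{(s)}_{\theta_\star}(\cdot\mid X_{\S\setminus\{s\}})$ is exactly the exponential tilt of $f^{(s)}_{\theta_\star}(\cdot\mid X_{\partial_n s})$ by $g_s(u)\eqdef\sum_{\ell\in\partial s\setminus D_n}\theta_\star(s,\ell)B(u,X_\ell)$, which by (\ref{boundBC}) satisfies $\|g_s\|_\infty\le C h_s$ with $h_s\eqdef\sum_{\ell\in\partial s\setminus D_n}|\theta_\star(s,\ell)|$. Because the normalizing-constant part of the approximate log-ratio is constant in $u$ and integrates to zero against the difference of the two conditional densities, $R_n(\theta)$ reduces to a sum over $s\in D_n$ of $\sum_\ell\theta(s,\ell)$ times the difference of the conditional expectations of $B(X_s,X_\ell)$ under the tilted (full) and untilted (approximate) conditionals, and each such difference is $O(h_s)$ by boundedness of $g_s$. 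Thus $|R_n(\theta)|\le C\sum_{s\in\Delta_n^{(c)}}h_s\big(\sum_{\ell\in D_n}|\theta(s,\ell)|\big)$, and Cauchy--Schwarz together with the definition of $b_n$ and the boundary constraint defining $\M^{(n)}(a_n,\tau_n)$ yields $|R_n(\theta)|\le Cb_n\tau_n\|\theta\|_2$; this is the mechanism by which the missing-node strength $b_n$ enters the rate.

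The penalty increment is controlled by A\ref{A0}: condition (\ref{condq}) makes $q_{\lambda_n}$ globally $C\lambda_n$-Lipschitz, so the penalty term is at most $Cn^{-1}\lambda_n\|\hat\delta_n\|_1\le Cn^{-1}\lambda_n\sqrt{a_n}\|\hat\delta_n\|_2$ (using that $\hat\delta_n\in\M^{(n)}(a_n,\tau_n)$ has at most $a_n$ nonzero entries), which is $O(\sqrt{a_n\log p_n/n})\|\hat\delta_n\|_2$ since $\lambda_n=O(\sqrt{n\log p_n})$. The \textbf{heart of the proof} is the uniform control of $(\mathbb{P}_n-\cPE_\star)m_{n,\theta}$ over the localized sparse set $B_{n,\delta}$. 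I would combine the $L^2$ envelope from the second inequality of A\ref{B2}, $\cPE_\star^{1/2}[m_{n,\theta}(X^{(1)})^2]\le\alpha_n'\|\theta\|_2$, the sup-norm bound $\|m_{n,\theta}\|_\infty\le C\|\theta\|_1\le C\sqrt{a_n}\delta$ from (\ref{boundBC}), and the covering estimate (\ref{coveringnumber}) in a Talagrand/chaining maximal inequality; the condition $a_n\sqrt{\log p_n}=O(\alpha_n'\sqrt n)$ is precisely what places the bound in the sub-Gaussian regime, so that the sub-Gaussian term $\alpha_n'\delta\sqrt{a_n\log p_n/n}$ dominates the Bernstein sup-norm term and the modulus is of order $\alpha_n'\,\delta\sqrt{a_n\log p_n/n}$, with exponential concentration about this mean.

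Putting the pieces together, on the event where the empirical modulus holds the basic inequality gives $\tfrac{\alpha_n}{2}\|\hat\delta_n\|_2^2\le C\big((1+\alpha_n')\sqrt{a_n\log p_n/n}+b_n\tau_n\big)\|\hat\delta_n\|_2$, so that (treating the bounded envelope constant $\alpha_n'$ as $O(1)$, as holds for bounded $B$) $\|\hat\delta_n\|_2\le\frac{C}{\alpha_n}(\sqrt{a_n\log p_n/n}+b_n\tau_n)=O(1/r_n)$. Since the radius $\|\hat\delta_n\|_2$ is random, I would make this rigorous by peeling over dyadic shells $2^j/r_n<\|\theta\|_2\le 2^{j+1}/r_n$: on each shell the curvature grows quadratically in the radius while the empirical, bias and penalty contributions grow only linearly, so for a large enough multiple $M$ the basic inequality fails once $\|\hat\delta_n\|_2>M/r_n$, and summing the exponentially small exceedance probabilities yields $r_n\|\hat\delta_n\|_2=O_p(1)$. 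The \textbf{main obstacle} is this uniform empirical-process step: one must check that the entropy integral produced by (\ref{coveringnumber}) contributes exactly the $\sqrt{a_n\log p_n}$ factor, that the higher-order Bernstein term is annihilated by $a_n\sqrt{\log p_n}=O(\alpha_n'\sqrt n)$, and that the concentration is uniform over the data-dependent sparse ball; the curvature, bias and penalty bounds are comparatively routine given A\ref{B2}, A\ref{A0} and (\ref{boundBC}).
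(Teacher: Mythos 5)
Your proposal is correct and follows essentially the same route as the paper: the same decomposition of $U_n$ into a curvature term lowered by the first inequality of A3 (via the integral-remainder/variance identity, giving $\tfrac{\alpha_n}{2}\|\theta\|_2^2$), the same missing-data bias term bounded by $b_n\tau_n\|\theta\|_2$ through the exponential-tilt comparison of $f^{(s)}_{\theta_\star}(\cdot\vert X_{\partial s})$ with $f^{(s)}_{\theta_\star^{(n)}}(\cdot\vert X_{\partial_n s})$ plus Cauchy--Schwarz over $\Delta_n^{(c)}$, the same Lipschitz penalty bound $c_0 n^{-1}\lambda_n a_n^{1/2}\|\theta\|_2$, and the same shell-peeling with an empirical-process modulus of order $\alpha_n'\delta\sqrt{a_n\log p_n/n}$ in which $a_n\sqrt{\log p_n}=O(\alpha_n'\sqrt n)$ kills the second-order term. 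The only (immaterial) deviation is in the maximal inequality: you invoke covering numbers with Talagrand-type concentration and exponentially small shell probabilities, while the paper converts the covering bound (\ref{coveringnumber}) into bracketing entropy via the Lipschitz-in-parameter property (Theorem 2.7.11 of \cite{vaartetwellner96}), applies the expectation bound of Lemma 3.4.2 there, and concludes by Markov's inequality with the convergent sum $\sum_{j\ge M}2^{-j}$ --- your stronger concentration step is available here (the envelopes are bounded) but not needed.
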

\begin{proof}
See Section \ref{proofthm3}.

\end{proof}

The theorem implies that $\hat\theta_n$ is consistent in estimating $\theta_\star^{(n)}$ if $\tau_nb_n=o(\alpha_n)$. In the above result, we need to find $a_n$ and $\tau_n$ that guarantee that $(\hat\theta_n-\theta_\star^{(n)})\in \M_n(a_n,\tau_n)$ for all $n$ large enough.  Notice that for any $\theta\in\M^{(n)}$, we have trivially
\[\left\{\sum_{s\in \Delta_n^{(c)}}\left(\sum_{\ell\in D_n} |\theta(s,\ell)|\right)^2\right\}^{1/2}\leq 2\sup_{\{s\in \Delta^{(c)}_n\}}|\{\ell\in D_n:\; |\theta(s,\ell)|>0\}|^{1/2}\; \|\theta\|_2.\]
This means that any $\theta\in\M^{(n)}(a_n)$ also belongs to $\M^{(n)}(a_n,\tau_n)$, for $\tau_n=2\textsf{n}_n^{1/2}$, where $\textsf{n}_n\eqdef \sup_{\{\theta\in\M^{(n)}(a_n)\}} \sup_{s\in \Delta_n^{(n)}}|\{\ell\in D_n:\; |\theta(s,\ell)|>0\}|$. 
Therefore with this choice of $\tau_n$, we can replace $\M^{(n)}(a_n,\tau_n)$ by $\M^{(n)}(a_n)$ in the above theorem. This leads to the following reformulation.

\begin{coro}\label{corothm3}
Assume (\ref{boundBC}), A\ref{B2}. Suppose that as $n\to\infty$, $a_n\sqrt{\log p_n}=O(\alpha_n'n^{1/2})$, and $\lambda_n=O(\sqrt{n\log p_n})$, and also that 
$(\hat\theta_n-\theta_\star^{(n)})\in \M_n(a_n)$ for all $n$ large enough. Then $r_n\|\hat\theta_n-\theta_\star^{(n)}\|_2=O_p(1)$ as $n\to\infty$, where $r_n=\alpha_n\sqrt{n}/\left(\sqrt{a_n\log p_n}+ \sqrt{n}b_n\textsf{n}^{1/2}_n\right)$. 
\end{coro}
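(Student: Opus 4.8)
The plan is to deduce the corollary directly from Theorem~\ref{thm3} by choosing $\tau_n$ so that membership in the sparsity class $\M^{(n)}(a_n)$ already forces membership in the smaller class $\M^{(n)}(a_n,\tau_n)$. Once this inclusion is established, the hypotheses of the theorem are met verbatim and its conclusion transfers after absorbing a harmless multiplicative constant into the $O_p(1)$.

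First I would fix $\tau_n\eqdef 2\textsf{n}_n^{1/2}$, with $\textsf{n}_n$ the quantity defined just before the statement, and record the elementary inequality displayed there: for every $\theta\in\M^{(n)}$,
\[\left\{\sum_{s\in \Delta_n^{(c)}}\Big(\sum_{\ell\in D_n}|\theta(s,\ell)|\Big)^2\right\}^{1/2}\le 2\,\sup_{s\in\Delta_n^{(c)}}\big|\{\ell\in D_n:\,|\theta(s,\ell)|>0\}\big|^{1/2}\,\|\theta\|_2.\]
This follows by Cauchy--Schwarz applied to each inner sum over its (at most $\textsf{n}_n$) nonzero terms, followed by the bound $\sum_{s\in\Delta_n^{(c)}}\sum_{\ell\in D_n}|\theta(s,\ell)|^2\le 2\|\theta\|_2^2$, which comes from $\Delta_n^{(c)}\subseteq D_n$ and the symmetry of $\theta$ (off-diagonal entries are counted twice in the full double sum).

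Then for any $\theta\in\M^{(n)}(a_n)$ the supremum above is bounded by $\textsf{n}_n^{1/2}$, so the right-hand side is at most $\tau_n\|\theta\|_2$; that is, $\theta$ satisfies the defining inequality of $\M^{(n)}(a_n,\tau_n)$. This yields the inclusion $\M^{(n)}(a_n)\subseteq\M^{(n)}(a_n,\tau_n)$. In particular, the standing hypothesis $(\hat\theta_n-\theta_\star^{(n)})\in\M^{(n)}(a_n)$ upgrades to $(\hat\theta_n-\theta_\star^{(n)})\in\M^{(n)}(a_n,\tau_n)$ for all $n$ large enough, while the remaining hypotheses of Theorem~\ref{thm3}, namely $a_n\sqrt{\log p_n}=O(\alpha_n'n^{1/2})$ and $\lambda_n=O(\sqrt{n\log p_n})$, are identical and carry over unchanged.

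Finally I would invoke Theorem~\ref{thm3} with this $\tau_n$ to obtain $r_n'\|\hat\theta_n-\theta_\star^{(n)}\|_2=O_p(1)$, where $r_n'=\alpha_n\sqrt{n}/(\sqrt{a_n\log p_n}+\sqrt{n}\,b_n\tau_n)=\alpha_n\sqrt{n}/(\sqrt{a_n\log p_n}+2\sqrt{n}\,b_n\textsf{n}_n^{1/2})$, and compare with the stated rate $r_n=\alpha_n\sqrt{n}/(\sqrt{a_n\log p_n}+\sqrt{n}\,b_n\textsf{n}_n^{1/2})$. Since the denominator of $r_n'$ is at most twice that of $r_n$, we have $r_n\le 2r_n'$, whence $r_n\|\hat\theta_n-\theta_\star^{(n)}\|_2\le 2\,r_n'\|\hat\theta_n-\theta_\star^{(n)}\|_2=O_p(1)$. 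I do not expect a genuine analytic obstacle: the corollary is a reformulation of the theorem, and its only content is the combinatorial inequality bounding the ``boundary'' $\ell^1$-to-$\ell^2$ ratio by $\tau_n=2\textsf{n}_n^{1/2}$, together with the observation that the resulting constant factor in the rate is absorbed by $O_p(1)$.
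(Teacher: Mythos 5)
Your proposal is correct and takes essentially the same route as the paper: the paragraph preceding the corollary makes the identical choice $\tau_n=2\textsf{n}_n^{1/2}$, records the same Cauchy--Schwarz bound to get the inclusion $\M^{(n)}(a_n)\subseteq\M^{(n)}(a_n,\tau_n)$, and then simply invokes Theorem \ref{thm3}. Your explicit reconciliation of the factor $2$ --- noting $r_n\leq 2r_n'$ so the constant is absorbed into the $O_p(1)$, since the stated $r_n$ omits the $2$ from $\tau_n$ --- tidies up a point the paper passes over silently.
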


If the penalty function is $q_\lambda(x)=\lambda x$, the extensive recent literature on lasso points to the fact that the estimator $\hat\theta_n$ is sparse and recovers the sparsity structure of the true network $\theta_\star$ (\cite{barnejeeetal08,meinshausenetal09,guoetal10}). This suggest that $a_n$ can be taken proportional to the sparsity of $\theta_\star^{(n)}$. That is,
\[a_n\propto\left|\{(s,\ell)\in\underline{D}_n^2:\; |\theta_\star(s,\ell)|>0\}\right|.\]

Finally, we will point out that if $b_n=0$, then there is no missing data problem. In that case,  Theorem \ref{thm3} yields a similar rate of convergence as in \cite{guoetal10}.

\subsubsection{Comment on A\ref{B2}}\label{assumB2}
For $s\in D_n$, $\theta\in\M^{(n)}$, consider the following matrices  $\rho^{(s)}_{n,\theta}\eqdef \{\rho^{(s)}_{n,\theta}(\ell,\ell'),\;\ell,\ell'\in D_n\}$ and $\bar\rho_{n,\theta}\eqdef \{\bar\rho_{n,\theta}(s,\ell;s',\ell'),\;(s,\ell),(s',\ell')\in D_n\times D_n\}$, where
\begin{multline*}
\rho^{(s)}_{n,\theta}(\ell,\ell')\eqdef \PE_\star\left[\textsf{Cov}_\theta\left(B(X_s,X_\ell),B(X_s,X_{\ell'}\vert X_{\partial_n s}\right)\right],\;\mbox{ and }\,\bar\rho_{n,\theta}(s,\ell;s',\ell')\\
\eqdef \PE_\star\left[\left(B(X_s,X_\ell)-\PE_\theta\left(B(X_s,X_\ell)\vert X_{\partial_n s}\right)\right)\left(B(X_{s'},X_{\ell'})-\PE_\theta\left(B(X_{s'},X_{\ell'})\vert X_{\partial_n s'}\right)\right)\right].\end{multline*}
It can be easily seen that if the smallest eigenvalue of $\rho^{(s)}_{n,\theta}$ is bounded from below by $\alpha_n>0$, uniformly in $s$ and $\theta$, then the first part of A\ref{B2} holds. Similarly, if the largest eigenvalue of $\bar\rho^{(s)}_{n,\theta}$ is bounded from above by $\alpha_n'<\infty$, uniformly in $\theta$, then the second part of A\ref{B2} holds.

In many practical examples, $B(x,y)=B_0(x)B_0(y)$, where $B_0$ is a nonnegative and bounded function. In that case, one can often check A\ref{B2}. Indeed, the matrix $\rho^{(s)}_{n,\theta}$ becomes $\rho^{(s)}_{n,\theta}(\ell,\ell')=\PE_\star\left(\bar B_{s,\ell}\bar B_{s,\ell'}\textsf{Var}_\theta\left(B(X_s)\vert X_{\partial_n s}\right)\right)$, where $B_{s,\ell}=1$ is $\ell=s$ and $B_{s,\ell}=B_0(X_\ell)$ otherwise. If there exists $c_n>0$ such that $\textsf{Var}_\theta\left(B(X_s)\vert X_{\partial_n s}\right)\geq c_n$ for all $s\in D_n$ and all $\theta\in\M^{(n)}$, then the first part of A\ref{B2} holds with $\alpha_n=c_n\alpha_{n,0}$, where $\alpha_{n,0}>0$ is the smallest of the eigenvalues of the matrices $\PE_\star\left(\bar B_{s,\ell}\bar B_{s,\ell'}\right)$. Similarly,
\begin{multline*}
\bar\rho_{n,\theta}(s,\ell;s',\ell')= \PE_\star\left[\bar B_{s,\ell}\bar B_{s'\ell'}\left(B(X_s)-\PE_\theta\left(B(X_s)\vert X_{\partial_n s}\right)\right)\left(B(X_{s'})-\PE_\theta\left(B(X_{s'})\vert X_{\partial_n s'}\right)\right)\right]\\
\leq C\PE_\star\left(\bar B_{s,\ell}\bar B_{s'\ell'}\right).\end{multline*}
Then the second part of A\ref{B2} holds and we can take $\alpha_n'$ proportional to  the largest eigenvalue of $\PE_\star\left(\bar B_{s,\ell}\bar B_{s'\ell'}\right)$.

\begin{example}[The auto-binomial and auto-logistic models]

We consider here the particular case of the auto-binomial models which is an extension of the popular auto-logistic model. The auto-binomial model allows to model data where the available observation at each node can be seen as a number of successes over a given common number of trials. Fix $\kappa\geq 1$ the number of trials and set $\Xset=\{0,\ldots,\kappa\}$. The interaction functions of the auto-binomial model are given by $A(u)={\kappa \choose u}$, $B_0(u)=u$  and $B(u,v)=uv$. The particular case $\kappa=1$ corresponds to the auto-logistic model. The modeling assumption here is that for any $s\in\S$,
\[X_s\vert X_{\partial_\theta s}=x_{\partial_\theta s}\sim \mathcal{B}(\kappa,\alpha_\theta^{(s)}(x_{\partial_\theta s})),\;\mbox{ where } \;\log\left(\frac{\alpha_\theta^{(s)}(x_{\partial_\theta s})}{1-\alpha_\theta^{(s)}(x_{\partial_\theta s})}\right)=\theta(s,s) + \sum_{\ell\in\S\setminus\{s\}}\theta(s,\ell)x_\ell.\]
In the above display, $\mathcal{B}(n,p)$ denotes the binomial distribution with parameters $n,p$. Now for $\theta\in\M^{(n)}$, $\textsf{Var}_\theta\left(B(X_s)\vert X_{\partial_n s}\right)= \kappa\alpha_{n,\theta}^{(s)}\left(1-\alpha_{n,\theta}^{(s)}\right)$, where $\alpha_{n,\theta}^{(s)}$ is given by  $\alpha_{n,\theta}^{(s)}=\left(1+\exp\left(-\theta(s,s)-\sum_{j\neq s\,j=1}^{p_n}\theta(s,j)X_j\right)\right)^{-1}$. If we insist that $\sup_{s,\ell\in D_n}|\theta(s,\ell)|\leq K$, and that $\theta\in \M^{(n)}(a_n)$, then
\[\textsf{Var}_\theta\left(B(X_s)\vert X_{\partial_n s}\right)\geq 4^{-1}\kappa e^{-2K \textsf{N}_n^{1/2}},\;\;s\in D_n,\theta\in\M^{(n)}(a_n),\]
where $\textsf{N}_n\eqdef \sup_{\theta\in\M^{(n)}(a_n)}\sup_{s\in D_n}|\{\ell\in D_n:\; |\theta(s,\ell)|>0\}$, is the maximum degree in $\M^{(n)}(a_n)$. It follows that A\ref{B2} holds with $\alpha_n=\alpha_{n,0}4^{-1}\kappa e^{-2K \textsf{N}_n^{1/2}}$, where $\alpha_{n,0}>0$ is the smallest of the eigenvalues of the matrices $\PE_\star\left(\bar B_{s,\ell}\bar B_{s,\ell'}\right)$; and $\alpha_n'$ can be take as proportional to  the largest eigenvalue of $\PE_\star\left(\bar B_{s,\ell}\bar B_{s'\ell'}\right)$.

\end{example}

\subsection{Monte Carlo Evidence}\label{sec:ex}
We consider the auto-logistic model where $\Xset=\{0,1\}$, $A(x)=0$, $B_0(x)=x$, and $B(x,y)=xy$. We work with the $\ell^1$ penalty: $q_\lambda(x)=\lambda x$. With respect to the number of nodes, we consider two cases: $p=50 $ and $p=80$. For each setting, we consider different values of $n$ (the sample size) through the formula $n=a\log p/\beta^2$, where $a$ is the number of non-zero elements of the true network structure that we choose to be approximately $1.3*p$, and where $\beta$ is chosen in the range $[0.3,2.0]$ (for $p=50$), and $[0.6,2.0]$ (for $p=80$). 

We compare three settings. In Setting 1, there is no missing data, and the samples are generated exactly from (\ref{model}), for $\theta=\theta_\star$ (we set up $\theta_\star$ such that $\theta_\star(s,\ell)\geq 0$ and we use Propp-Wilson's perfect sampler). In Setting 2 and 3, we generate the sample $(X^{(i)}_1,\ldots,X^{(i)}_p,X^{(i)}_{p+1},\ldots,X^{(i)}_{p+r})$ from (\ref{model}), for $\theta=\theta_\star$, and we retain only $(X^{(i)}_1,\ldots,X^{(i)}_p)$, for $1\leq i\leq n$. Thus there are $r$ missing nodes. In Setting 2, we use $r=8$, whereas in Setting 3, we set $r=20$. Table 1 shows the corresponding values of $b_n$ in each setting.

\vspace{0.5cm}

\begin{table}[h]\label{table1}
\begin{center}
\small
\begin{tabular}{lccc}
\hline
&Setting 1, $r=0$ & Setting 2, $r=8$ & Setting 3, $r=20$\\
\hline
$p=50$ &0 & 1.8 & 4.41 \\
$p=80$ &0 & 1.8 & 3.6 \\
\hline
\end{tabular}
\caption{Values of $b_n$ in each setting of the simulation.}
\end{center}
\end{table}

\vspace{0.5cm}

Regardless of the data generation mechanism, we fit model (\ref{model})  by $\ell^1$ penalized pseudo-likelihood and compute the relative Mean Square Error $\PE_\star\left(\|\hat\theta-\theta_\star\|_2\right)/\|\theta_\star\|_2$, estimated from $K$ replications of the estimator ($K=50$). In Figure 1, we plot $\PE_\star\left(\|\hat\theta-\theta_\star\|_2\right)/\|\theta_\star\|_2$ as a function of $\beta$. As expected, the more missing data, the worst the estimator behaves. Notice that in Setting 2 (where $r=8$), the loss of accuracy of the estimator is worst for $p=50$ compared to $p=80$, although we have the same value $b_n=1.8$. This points to the fact that in the rate of convergence of $\hat\theta$, the factor $b_n$ is modulated by a factor related to size of the problem as predicted in Theorem \ref{thm3} (the term $\tau_n$).

\begin{center}
\rotatebox{0}{\scalebox{0.4}{\includegraphics{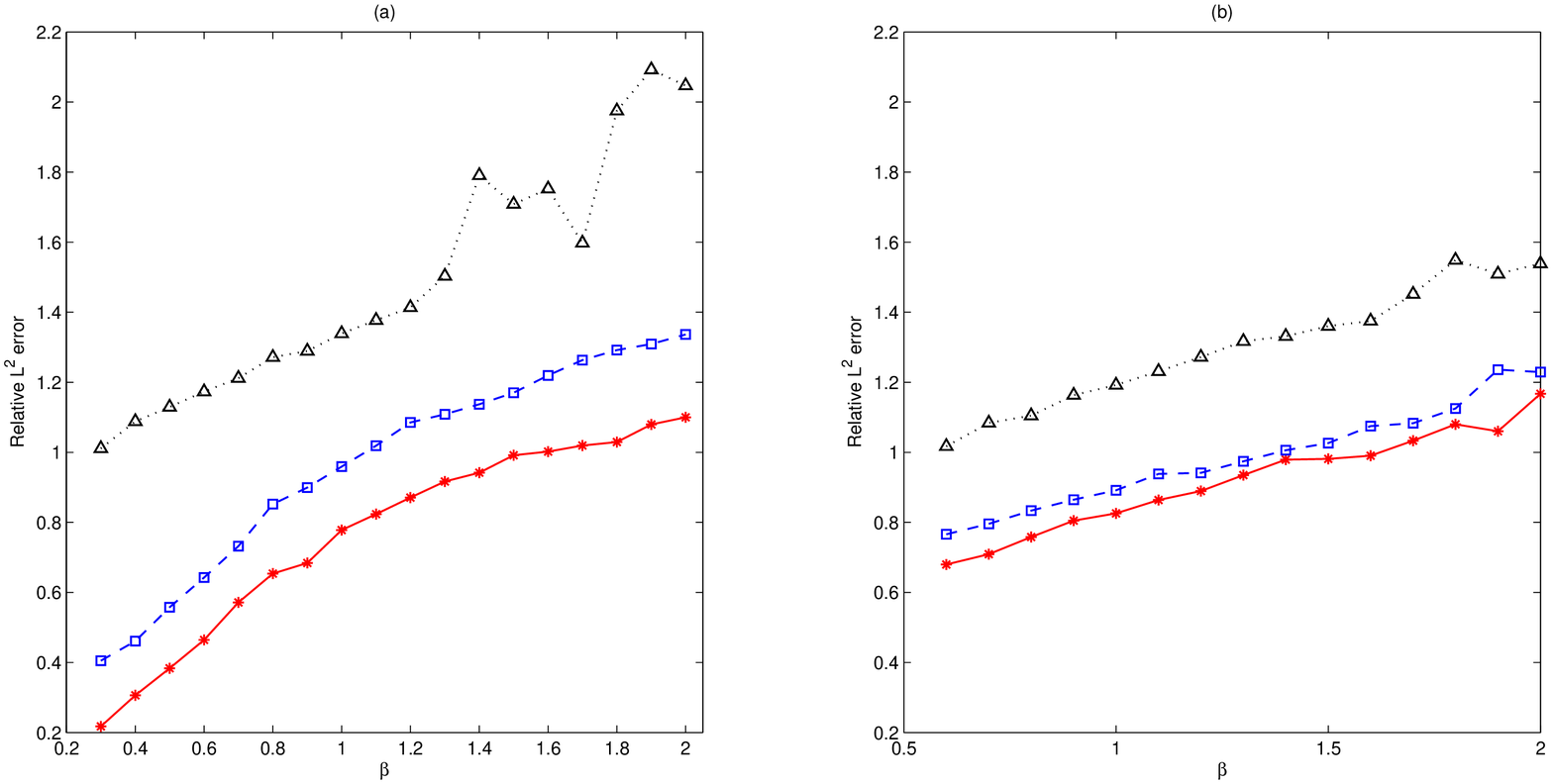}}}\\
\underline{Figure 1}:\footnotesize{ Relative $\textsf{MSE}$ versus $\beta$, where star-line is Setting 1, square-line is Setting 2, triangle-line is Setting 3. (a) $p=50$, (b) $p=80$.}
\end{center}

\section{Proofs}\label{sec:proofs}
\subsection{Some basic facts on infinite volume auto-models}\label{infiniteautomodels}
We recall from  \cite{georgii88} some basic facts on Gibbs distributions. Let $(\Xset,\e,\rho)$ and $\S$ as in Section \ref{setting}.
Let $(\Omega,\F)=(\Xset^\S,\e^\S)$ be the product space equipped with the product Borel sigma-algebra. We will need few more notations. We denote by $X_s$ the projection maps, that is, $X_s:\;(\Omega,\F)\to (\Xset,\e)$ such that $X_s(\omega)=\omega_s$. For $A\subseteq\Xset$ and $\Delta\subset\S$, we denote by $A^\Delta$ the product set $\{(\omega_s)_{s\in\Delta},\;\omega_s\in A\}$. We define $X_\Delta:\;(\Omega,\F)\to(\Xset^\Delta,\e^\Delta)$ as $X_\Delta(\omega)=\{\omega_s,\;s\in\Delta\}$ and we denote $\F_\Delta$ the sub $\sigma$-algebra of $\F$ generated by the map $X_U$, $U\subseteq\Delta$, $U$ finite. For two disjoint subsets $\Lambda$, $\Delta$ of $\S$, if $u=\{x_i,\;i\in\Delta\}$ and $v=\{x_i,\;i\in\Lambda\}$ we write $uv=\{x_i,\;i\in \Delta\cup\Lambda\}$ for the the concatenation of $u$, $v$.

For $\Delta\subset\S$ finite, we define the kernel $\rho_\Delta$  from $(\Omega,\F_{\S\setminus\Delta})$ to $(\Omega,\F)$ as follows
\[\rho_\Delta(\omega,A)\eqdef \left(\rho^\Delta\times \delta_{\omega_{\S\setminus\Delta}}\right)(A)
=\rho^\Delta\left(\{u\in\Xset^\Delta:\; uw_{\S\setminus\Delta}\in A\}\right),\;\;\;\omega\in\Omega,\;A\in\F.\]
In the above, $\delta_x$ is the Dirac mass at $x$ and $\rho^\Delta$ denotes the product measure $\bigotimes_{s\in\Delta}\rho$ on $(\Xset^\Delta,\e^\Delta)$. This kernel is best understood through its operation on bounded functions. If $f:\;\Omega\to\rset$ is a bounded measurable function and $\omega\in\Omega$, we have
\[\rho_\Delta f(\omega)\eqdef \int \rho_\Delta(w,dz)f(z)=\int_{\Xset^\Delta} f(u\omega_{\S\setminus\Delta})\rho^\Delta(du).\]

For an infinite matrix $\theta:\;\S\times\S\to\rset$ and $\Lambda$ a finite subset of $\S$, we define $\pi_{\theta,\Lambda}$ a probability kernel from $(\Omega,\F_{\S\setminus\Lambda})$ to $(\Omega,\F)$ by
\[\pi_{\theta,\Lambda}(\omega,dz)=\frac{1}{Z_\theta(\omega)}\exp\left\{H_{\theta,\Lambda}(z)\right\}\rho_\Lambda(\omega,dz),\]
where for $z\in\Omega$,
\[H_{\theta,\Lambda}(z)=\sum_{s\in\Lambda}\left(A(z_s) + \theta(s,s)B_0(z_s)+ \sum_{\ell\succeq s,\ell\neq s} \theta(s,\ell)B(z_s,z_\ell)\right).\]
The term $Z_\theta(\omega)\eqdef\rho_{\Lambda} H_{\theta,\Lambda}(\omega)$ is the normalizing constant. We write  $\pi_\theta=\{\pi_{\theta,\Lambda},\;\Lambda\subset\S,\;\Lambda \mbox{ finite}\}$ assuming that each kernel $\pi_{\theta,\Lambda}$ is well defined. If $\mu$ is a probability measure on $(\Omega,\F)$, $h:\;\Omega\to\rset$ a $\mu$-integrable function and $\mathcal{G}$ a sub-sigma-algebra of $\F$, we denote by $\mu(h\vert \mathcal{G})$ the conditional expectation of $h$ given $\mathcal{G}$. An infinite volume auto-model is a probability measure $\mu_\theta$ on $(\Omega,\F)$ that is consistent with the family $\pi_{\theta,\Lambda}$ in the sense that
\begin{equation}\label{gibbsmeasure}\mu_\theta\left(f|\F_{\S\setminus\Lambda}\right)(\cdot)=\int \pi_{\theta,\Lambda}(\cdot,dz)f(z),\;\;\mu_\theta-a.s.,\end{equation}
for any finite subset $\Lambda$  of $\S$ and any bounded measurable function $f:\;(\Omega,\F)\to\rset$. Notice that (\ref{gibbsmeasure}) implies that $\mu_\theta\pi_{\theta,\Lambda}=\mu_\theta$, that is, each probability kernel in the family $\pi_\theta$ is invariant with respect to $\mu_\theta$. The probability measure $\mu_\theta$ is an example of a Gibbs measure. We call a random variable  $X=\{X_s,\;s\in\S\}$ with distribution $\mu_\theta$ an auto-model random field with distribution $\mu_\theta$ or with conditional specification $\pi_\theta$. It is well known that given a conditional specification $\pi_\theta$, a consistent Gibbs measure does not always exist and when it does, it is not necessarily unique. In the present case, infinite-volume auto-models exist. This follows for example from \cite{georgii88}, Theorem 4.23 (a).

\begin{prop}
Suppose that (\ref{boundBC}) holds and let $\theta:\;\S\times\S\to\rset$ be an infinite matrix such that $\textsf{deg}(s,\theta)<\infty$ for all $s\in\S$. Then the set of probability measure $\mu_\theta$ that satisfies (\ref{gibbsmeasure}) is nonempty.
\end{prop}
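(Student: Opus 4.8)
The statement asserts that the set $\mathcal{G}$ of probability measures on $(\Omega,\F)$ consistent with the family $\pi_\theta=\{\pi_{\theta,\Lambda}\}$ is nonempty; my plan is to obtain such a measure as a weak limit of finite-volume Gibbs distributions, which is precisely the Dobrushin--Lanford--Ruelle construction underlying \cite{georgii88}, Theorem 4.23 (a). Thus the proof reduces to checking the hypotheses of that theorem, and the only real work consists of a handful of uniform bounds that follow from (\ref{boundBC}) together with the assumption $\deg(s,\theta)<\infty$.

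First I would verify that $\pi_\theta$ is a genuine specification. For a finite $\Lambda$ the Hamiltonian $H_{\theta,\Lambda}$ is well defined, because the inner series $\sum_{\ell\succeq s,\,\ell\neq s}\theta(s,\ell)B(z_s,z_\ell)$ converges absolutely: by (\ref{boundBC}) and the definition of $\deg$,
\[\sup_{z\in\Omega}|H_{\theta,\Lambda}(z)|\le \sum_{s\in\Lambda}\left(\|A\|_\infty+|\theta(s,s)|\,\|B_0\|_\infty+\|B\|_\infty\,\deg(s,\theta)\right)<\infty.\]
Consequently $e^{H_{\theta,\Lambda}}$ is bounded above and below by strictly positive constants (for each fixed finite $\Lambda$), the normalizing constant $Z_\theta(\omega)$ lies in $(0,\infty)$, and $\pi_{\theta,\Lambda}$ is a proper probability kernel equivalent to $\rho_\Lambda$. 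The compatibility identity $\pi_{\theta,\Lambda'}\pi_{\theta,\Lambda}=\pi_{\theta,\Lambda'}$ for $\Lambda\subseteq\Lambda'$ is then a routine consequence of the additive, potential form of $H_{\theta,\Lambda}$, so $\pi_\theta$ is indeed a specification.

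Next I would exploit compactness. Since $\Xset$ is compact and $\S$ is countable, $\Omega=\Xset^\S$ is a compact metrizable space by Tychonoff's theorem, so $\mathcal{P}(\Omega)$ is sequentially compact in the topology of weak convergence. Fixing a boundary configuration $\omega_0\in\Omega$ and an exhausting sequence $\Lambda_n\uparrow\S$, the finite-volume Gibbs measures $\mu_n\eqdef\pi_{\theta,\Lambda_n}(\omega_0,\cdot)$ then admit a weakly convergent subsequence $\mu_{n_k}\to\mu$. To see that $\mu$ satisfies (\ref{gibbsmeasure}), I would fix a finite $\Lambda$ and a test function $f$, use the consistency relation $\mu_{n_k}\pi_{\theta,\Lambda}=\mu_{n_k}$ (valid once $\Lambda\subseteq\Lambda_{n_k}$, by the compatibility identity applied at $\omega_0$), and pass to the limit on both sides.

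The crux --- and the step I expect to be the main obstacle --- is the passage to the limit in $\int \pi_{\theta,\Lambda}f\,d\mu_{n_k}$, which requires $\pi_{\theta,\Lambda}f$ to be a continuous (quasilocal/Feller) function of the boundary condition. Here the uniform summability $\deg(s,\theta)<\infty$ is essential: the tail $\sum_{\ell\notin\Delta}|\theta(s,\ell)|\to0$ as $\Delta\uparrow\S$ shows that $H_{\theta,\Lambda}$, and hence $\pi_{\theta,\Lambda}f$ for bounded $f$, is a uniform limit of local functions, i.e.\ the specification is quasilocal. In the main case of interest, where $\Xset$ is finite, every function on $\Xset$ is automatically continuous and the Feller property is immediate; for a general compact $\Xset\subset\rset^{m_\Xset}$ with merely measurable $B$ this continuity in the product topology is the delicate point, and it is exactly the condition packaged into Georgii's Theorem 4.23 (a). Verifying it, rather than the compactness or the algebraic consistency, is where the care is needed.
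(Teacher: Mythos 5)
Your proposal is correct and takes essentially the same route as the paper: the paper's entire proof consists of invoking \cite{georgii88}, Theorem 4.23~(a), and your argument is exactly the reduction to that theorem together with the standard DLR compactness construction underlying it, with the hypotheses (well-definedness of $H_{\theta,\Lambda}$ via $\deg(s,\theta)<\infty$, compatibility, and the Feller/quasilocality point --- automatic when $\Xset$ is finite) correctly identified. Nothing further is needed.
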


\subsection{Consistency: proof of Theorem \ref{thm2}}\label{proofthm2}
The theorem consists in showing that for almost all sample paths, the function $U_n$ epi-converges to $k(\theta_\star;\cdot)$. It is obtained through a slight modification of  Theorem 5.1 of \cite{hess96}.
\subsubsection{Preliminaries}
Let $(\Vset,\dist)$ be a Polish space with metric $\dist$ and Borel sigma-algebra $\B(\Vset)$. Let $\{g,f_n,\;n\geq 1\}$ be a sequence of real-valued functions defined on $\Vset$.  The next proposition states that if $f_n$ can be written as $f_n=g_n+r_n$, where $r_n$ converges to zero in an appropriate sense, then if $g_n$ epi-converge to $g$, so does $f_n$. The proof is simple and is omitted. It also follows as a special case of \cite{dalmaso93}~Proposition 6.20.

\begin{lemma}\label{lem1}
Suppose that $f_n=g_n+r_n$, where $\{g,r_n,g_n,\;n\geq 1\}$ are real-valued functions defined on $\Vset$, such that $g_n$ epi-converges to $g$. Suppose that $|r_n(u)|\leq c(1+\dist(u,0))\alpha_n$ for all $u\in \Vset$ and for some finite constant $c$,  where $\alpha_n\to 0$. Then $f_n$ epi-converges to $g$.
\end{lemma}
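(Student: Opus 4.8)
The plan is to show that the vanishing perturbation $r_n$ leaves the epi-limits unchanged: I would prove that $f_n$ and $g_n$ have the same epi-limit inferior and the same epi-limit superior at every point, and then simply read off the conclusion from the epi-convergence of $g_n$ to $g$. The crucial observation is that the epi-limits are built from infima of $f_n$ over the shrinking balls $B(x,k^{-1})$, and on each such ball $r_n$ is uniformly small in $n$.

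First I would fix $x\in\Vset$ and an integer $k\geq 1$. For every $v\in B(x,k^{-1})$ the triangle inequality gives $\dist(v,0)\leq \dist(v,x)+\dist(x,0)<k^{-1}+\dist(x,0)$, so the hypothesis on $r_n$ produces the bound
\[
|r_n(v)|\leq c\left(1+k^{-1}+\dist(x,0)\right)\alpha_n\eqdef M_k(x)\,\alpha_n,
\]
with $M_k(x)$ a finite constant independent of both $n$ and of $v\in B(x,k^{-1})$. Writing $f_n=g_n+r_n$ and taking infima over $v\in B(x,k^{-1})$ then gives
\[
\inf_{v\in B(x,k^{-1})}g_n(v)-M_k(x)\alpha_n\leq \inf_{v\in B(x,k^{-1})}f_n(v)\leq \inf_{v\in B(x,k^{-1})}g_n(v)+M_k(x)\alpha_n.
\]
Since $\alpha_n\to 0$ and $M_k(x)$ is fixed, the left and right members have the same $\liminf$ and the same $\limsup$ in $n$ as the middle member, so for every $k$ one obtains $\liminf_n\inf_{v\in B(x,k^{-1})}f_n(v)=\liminf_n\inf_{v\in B(x,k^{-1})}g_n(v)$ and the analogous identity with $\limsup$. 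Taking the supremum over $k\geq 1$ yields $\textsf{li}_e f_n(x)=\textsf{li}_e g_n(x)$ and $\textsf{ls}_e f_n(x)=\textsf{ls}_e g_n(x)$ for every $x\in\Vset$. Because $g_n$ epi-converges to $g$, we have $\textsf{ls}_e g_n(x)\leq g(x)\leq \textsf{li}_e g_n(x)$, and substituting these identities gives $\textsf{ls}_e f_n(x)\leq g(x)\leq \textsf{li}_e f_n(x)$, i.e. $f_n$ epi-converges to $g$.

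I expect the only point requiring care to be the treatment of the extended-real values. Although each $f_n,g_n$ is real-valued, the infimum of such a function over $B(x,k^{-1})$ may equal $-\infty$, so the sandwich inequality and the ``vanishing-perturbation'' step must be read in $\rset\cup\{-\infty\}$ (these infima cannot equal $+\infty$, since the ball is nonempty and contains $x$). This is harmless: the sandwiching forces $\inf_{v\in B(x,k^{-1})}g_n(v)=-\infty$ if and only if $\inf_{v\in B(x,k^{-1})}f_n(v)=-\infty$, and on the remaining indices both infima are finite with difference at most $M_k(x)\alpha_n\to 0$; in either case the equality of the $\liminf$'s and $\limsup$'s persists. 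Everything else reduces to the elementary fact that adding a null sequence changes neither a $\liminf$ nor a $\limsup$.
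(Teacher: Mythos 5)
Your proof is correct, and every step checks out: the growth bound $|r_n(u)|\leq c(1+\dist(u,0))\alpha_n$ is exactly what makes $r_n$ uniformly $O(\alpha_n)$ on each fixed ball $B(x,k^{-1})$, the sandwich of infima is valid in $\rset\cup\{-\infty\}$ (the infima cannot be $+\infty$ since $x$ itself lies in the ball), and adding a null sequence changes neither $\liminf$ nor $\limsup$ even when the values are $-\infty$; taking the supremum over $k$ then gives the identities $\textsf{li}_e f_n=\textsf{li}_e g_n$ and $\textsf{ls}_e f_n=\textsf{ls}_e g_n$, from which the conclusion is immediate. There is nothing to compare line-by-line here, because the paper deliberately omits the proof, remarking only that it is simple and follows as a special case of Proposition 6.20 of \cite{dalmaso93}, which asserts stability of $\Gamma$-limits under adding a sequence that converges continuously (here $r_n$ converges continuously to $0$: if $u_n\to u$ then $\dist(u_n,0)$ stays bounded while $\alpha_n\to 0$). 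Your argument is in effect a self-contained, elementary proof of that special case, working directly from the paper's definition of epi-limits; it even yields slightly more than the lemma asks, namely that the epi-limit inferior and superior of $f_n$ coincide pointwise with those of $g_n$ regardless of what the limit $g$ is, so the perturbation is erased before epi-convergence of $g_n$ is ever invoked. The only point deserving the care you already gave it is the extended-real bookkeeping, which you handled correctly.
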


For a real-valued function $f$ on $\Vset$, $k$ integer, we define its Lipschitz approximation of order $k$ as $f^{(k)}(u)\eqdef \inf_{v\in V} \{f(v)+k\dist(u,v)\}$.  The Lipschitz approximation  $f^{(k)}$ is a Lipschitz function (with Lipschitz coefficient $k$). For any $u\in \Vset$ the sequence $\{f^{(k)}(u),\;k\geq 1\}$ is nondecreasing, upper bounded by $f(u)$ and if $f$ itself is Lipschitz on $\Vset$,  $\sup_{k\geq 1} f^{(k)}(u)=f(u)$ (see e.g. \cite{dalmaso93}~Theorem 9.13).

Let $(E,\e)$ be a measurable space. A more useful sigma-algebra to work with is $\hat\e$, the sigma-algebra of universally measurable subsets of $E$ with respect to $\e$. $\hat\e=\cap_{\mu}\e_\mu$ where $\e_\mu$ is the $\mu$-completion of $\e$ with respect to  a $\sigma$-finite measure $\mu$ on $(E,\e)$ and where the intersection is over all $\sigma$-finite measures on $(E,\e)$. 
If $g:\;E\times \Vset\to \rset$ is a function,  $x\in E$ and $k\geq 1$, we denote $g^{(k)}(x,\cdot)$ the Lipschitz approximations of order $k$ of $g(x,\cdot)$. It is known (\cite{hess96}~Proposition 4.4) that if $g$ is $\e\times \B(\Vset)$-measurable, then for any $k\geq 1$, $g^{(k)}$  is $\hat\e\times\B(\Vset)$-measurable. The following result is taken from \cite{hess96}~Proposition 3.4.

\begin{prop}\label{prop10}Let $\{g_n,\;n\geq 1\}$ be a sequence of $\e\times \B(\Vset)$-measurable real-valued functions satisfying the following assumptions. There exist a finite constant $c\in (0,\infty)$, $u_0\in \Vset$, such that $g_n(x,u_0)=0$ for all $n\geq 1$, and
\begin{equation}\label{lipcond1}
\sup_{n\geq 1}\sup_{x\in E}\left|g_n(x,u)-g_n(x,v)\right|\leq cd(u,v) \;\; u,v\in \Vset,\;x\in E.\end{equation}
For $x\in E$, let $\textsf{li}_e g_n(x,\cdot)$ and $\textsf{ls}_e g_n(x,\cdot)$ be the epi-limit inferior and superior of the sequence $\{g_n(x,\cdot),\;n\geq 1\}$ respectively. Then for all $u\in\Vset$,
\[\textsf{li}_e g_n(x,u)=\sup_{k\geq 1}\liminf_{n\to\infty} g_n^{(k)}(x,u),\;\;\mbox{ and }\;\;\textsf{ls}_e g_n(x,u)=\sup_{k\geq 1} \limsup_{n\to\infty} g^{(k)}_n(x,u).\]
\end{prop}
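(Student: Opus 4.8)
The plan is to show that, thanks to the uniform Lipschitz bound (\ref{lipcond1}), both quantities in each identity collapse to the ordinary pointwise $\liminf_n g_n(x,u)$ (respectively $\limsup_n g_n(x,u)$); once both sides are identified with this common value, the two identities follow at once. Throughout I fix $x\in E$ and $u\in\Vset$ and treat the statement as a purely deterministic claim about the real sequence $\{g_n(x,\cdot)\}$, so no measurability considerations enter here.

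For the left-hand side I would first sandwich the ball-infimum that appears in the definition of the epi-limits. Writing $A_{n,k}\eqdef\inf_{v\in B(u,k^{-1})}g_n(x,v)$, the choice $v=u$ gives $A_{n,k}\leq g_n(x,u)$, while (\ref{lipcond1}) gives $g_n(x,v)\geq g_n(x,u)-c\,\dist(u,v)\geq g_n(x,u)-c/k$ for every $v\in B(u,k^{-1})$, hence $A_{n,k}\geq g_n(x,u)-c/k$. Taking $\liminf_n$ (the additive constant $c/k$ passing through) and then $\sup_{k\geq 1}$, the term $c/k$ vanishes and I obtain $\textsf{li}_e g_n(x,u)=\liminf_n g_n(x,u)$; the identical argument with $\limsup_n$ gives $\textsf{ls}_e g_n(x,u)=\limsup_n g_n(x,u)$.

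For the right-hand side the key observation is that the Lipschitz approximation stabilizes at $g_n$ once the approximation order exceeds the Lipschitz constant. Indeed $g_n^{(k)}(x,u)\leq g_n(x,u)$ always (take $v=u$), and for any integer $k\geq c$ the bound $g_n(x,v)+k\,\dist(u,v)\geq g_n(x,u)+(k-c)\dist(u,v)\geq g_n(x,u)$, obtained again from (\ref{lipcond1}), yields $g_n^{(k)}(x,u)\geq g_n(x,u)$; hence $g_n^{(k)}(x,u)=g_n(x,u)$ for all integers $k\geq c$. Since $g_n^{(k)}(x,u)$ is nondecreasing in $k$ (as already recorded for Lipschitz approximations) and bounded above by $g_n(x,u)$, for each fixed $k$ one has $\liminf_n g_n^{(k)}(x,u)\leq\liminf_n g_n(x,u)$, with equality as soon as $k\geq c$; taking $\sup_{k\geq 1}$ gives $\sup_{k\geq 1}\liminf_n g_n^{(k)}(x,u)=\liminf_n g_n(x,u)$, and likewise for $\limsup_n$. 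Combining the two previous paragraphs proves both identities.

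The computations are elementary, so the main point to get right is organizational rather than technical: $\sup_k$ and $\liminf_n$ (or $\limsup_n$) do not commute in general, so one must establish the two pointwise sandwich inequalities first and only afterwards apply the iterated limits in the correct order. The one genuinely substantive input is the uniform-in-$(n,x)$ Lipschitz constant $c$ from (\ref{lipcond1}): it is what forces the ball-regularization to approximate $g_n(x,u)$ to within $c/k$ and what makes the Lipschitz approximation coincide with $g_n$ for $k\geq c$, so that both regularizations recover the same pointwise limit.
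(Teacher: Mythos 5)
Your proof is correct, but note that the paper contains no proof of this proposition to compare against: it is imported verbatim from Hess (1996), Proposition 3.4, and in Hess's paper the identity is established in a more general setting where it is genuinely substantive. Your argument is an elementary shortcut made possible by the hypotheses as restated here, and it proves more than the identity: the uniform-in-$(n,x)$ Lipschitz bound \eqref{lipcond1} forces \emph{both} sides to collapse to the ordinary pointwise limits. Your two sandwiches are sound: the ball-infimum satisfies $g_n(x,u)-c/k\leq \inf_{v\in B(u,k^{-1})}g_n(x,v)\leq g_n(x,u)$, so $\textsf{li}_e g_n(x,u)=\liminf_{n\to\infty}g_n(x,u)$ and $\textsf{ls}_e g_n(x,u)=\limsup_{n\to\infty}g_n(x,u)$; and for integer $k\geq c$ the stabilization $g_n^{(k)}(x,u)=g_n(x,u)$ holds \emph{termwise in $n$} precisely because the constant $c$ in \eqref{lipcond1} does not depend on $n$, which is the one place uniformity is essential (with $n$-dependent Lipschitz constants this identification fails and one would need Hess's general argument, which works through properties of the Lipschitz regularizations themselves rather than identifying them with $g_n$). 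Your order of operations is also handled correctly: you establish the pointwise inequalities first and only then take $\liminf_n$ (or $\limsup_n$) followed by $\sup_k$, avoiding any illegitimate interchange. What each approach buys: Hess's proposition is stated so as to apply beyond equi-Lipschitz families, whereas your proof is self-contained and exposes the structural fact that for equi-Lipschitz sequences epi-convergence coincides with pointwise convergence --- a reading consistent with how the proposition is actually deployed downstream in Propositions \ref{prop11} and \ref{prop12}, where pointwise laws of large numbers are converted into statements about epi-limits.
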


\begin{prop}\label{prop11}Let $\{g_n,\;n\geq 1\}$ be as in Proposition \ref{prop10}, and  let $\{X_k,\;k\geq 1\}$ be a sequence of $E$-valued  random variables defined on some probability space $(\Omega,\mathcal{A},\PP)$. Define
\[h_n(\omega,u)\eqdef \frac{1}{n}\sum_{k=1}^ng_n(X_k(\omega),u),\;n\geq 1,\;\omega\in\Omega,\;u\in \Vset.\]
Suppose that there exists a Lipschitz function $\phi:\;\Vset\to\rset$ and $N\subseteq\Omega$, $\PP(N)=0$ such that for all $\omega\notin N$,
\begin{equation}\label{slln1}\lim_{n\to\infty} h_n(\omega,u) =\phi(u),\;\;\mbox{ for all }u\in \Vset.\end{equation}
Then for all $\omega\notin N$, $\textsf{ls}_e h_n(\omega,u) \leq \phi(u)$, for all $u\in \Vset$, where $\textsf{ls}_e h_n(\omega,\cdot)$ is the epi-limit superior of the function $h_n(\omega,\cdot)$.
\end{prop}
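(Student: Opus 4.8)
The plan is to unwind the definition of the epi-limit superior and to exploit the elementary fact that the infimum of a function over an open ball is always bounded above by the value of the function at the center of that ball. No genuinely new estimate is required: the claimed inequality is the ``easy half'' of epi-convergence, and the uniform Lipschitz property inherited from Proposition \ref{prop10} is not needed for this direction. (That Lipschitz property is what would drive the matching lower bound $\textsf{li}_e h_n(\omega,u)\geq\phi(u)$, which is not asserted here; the two together, with $\textsf{li}_e\leq\textsf{ls}_e$, would give full epi-convergence of $h_n(\omega,\cdot)$ to $\phi$.)

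First I would fix $\omega\notin N$ and $u\in\Vset$, and record that for every integer $k\geq 1$ the center $u$ lies in the open ball $B(u,k^{-1})$, since $\dist(u,u)=0<k^{-1}$. Consequently, for every $n\geq 1$,
\[\inf_{v\in B(u,k^{-1})} h_n(\omega,v)\leq h_n(\omega,u).\]

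Next I would pass to the limit in $n$. Taking $\limsup_{n\to\infty}$ on both sides and invoking the pointwise convergence hypothesis (\ref{slln1}), which gives $\limsup_{n\to\infty}h_n(\omega,u)=\lim_{n\to\infty}h_n(\omega,u)=\phi(u)$, I obtain, for each $k\geq 1$,
\[\limsup_{n\to\infty}\inf_{v\in B(u,k^{-1})} h_n(\omega,v)\leq\phi(u).\]
Since the right-hand side is independent of $k$, taking the supremum over $k\geq 1$ yields
\[\textsf{ls}_e h_n(\omega,u)=\sup_{k\geq 1}\limsup_{n\to\infty}\inf_{v\in B(u,k^{-1})} h_n(\omega,v)\leq\phi(u),\]
which is the assertion, valid for all $u\in\Vset$ and all $\omega\notin N$.

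The argument carries no real obstacle: the only point deserving care is the correct ordering of the three operations (the inner infimum over the shrinking balls, the outer $\limsup$ in $n$, and finally the supremum in $k$), together with the observation that the center belongs to each ball, which is precisely what collapses the epi-limit superior below the ordinary pointwise limit superior. Alternatively, one could route the proof through the Lipschitz-approximation characterization of Proposition \ref{prop10}, noting that $h_n(\omega,\cdot)$ satisfies the same uniform Lipschitz bound and anchoring condition as the $g_n$, so that $h_n^{(k)}(\omega,u)\leq h_n(\omega,u)\to\phi(u)$ and hence $\sup_{k\geq 1}\limsup_{n\to\infty}h_n^{(k)}(\omega,u)\leq\phi(u)$; this reaches the same conclusion but is heavier than necessary.
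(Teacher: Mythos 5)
Your proof is correct, and it takes a genuinely more elementary route than the paper's. You observe that the asserted inequality is just the general fact that the epi-limit superior at a point is dominated by the ordinary pointwise $\limsup$ there: since $u\in B(u,k^{-1})$ for every $k$, the inner infimum in the definition of $\textsf{ls}_e$ is at most $h_n(\omega,u)$, and the pointwise convergence (\ref{slln1}) finishes the argument after taking $\limsup_n$ and then $\sup_k$; neither the uniform Lipschitz bound (\ref{lipcond1}) nor the Lipschitz property of $\phi$ is touched, exactly as you claim. The paper instead stays inside the Lipschitz-approximation framework of Hess: it transfers (\ref{lipcond1}) to $h_n$, invokes Proposition \ref{prop10} to write $\textsf{ls}_e h_n(\omega,u)=\sup_{k\geq 1}\limsup_{n\to\infty} h_n^{(k)}(\omega,u)$, picks for each $k$ a sequence $\{v_p\}$ realizing $\phi^{(k)}(u)=\inf_{p\geq 1}\{\phi(v_p)+k\dist(u,v_p)\}$, interchanges $\limsup_n$ with the infimum to obtain $\limsup_n h_n^{(k)}(\omega,u)\leq\phi^{(k)}(u)$, and finally uses $\sup_{k\geq 1}\phi^{(k)}=\phi$, valid because $\phi$ is Lipschitz. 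That route consumes the Lipschitz hypotheses at two points and yields the slightly finer intermediate bound $\limsup_n h_n^{(k)}(\omega,u)\leq\phi^{(k)}(u)$ for each fixed $k$, but its main payoff is uniformity of exposition: the same Lipschitz-approximation machinery is indispensable for the hard direction $\textsf{li}_e h_n\geq\phi$ carried out in Proposition \ref{prop12}, so the paper proves both halves in one idiom. Your direct argument buys brevity and strictly more generality (it holds for arbitrary real-valued $h_n$ and arbitrary limit $\phi$, with no regularity whatsoever); your sketched alternative via $h_n^{(k)}(\omega,u)\leq h_n(\omega,u)$ is also valid and still shorter than the paper's, though, as you note, it needlessly reinstates the Lipschitz hypotheses through Proposition \ref{prop10}.
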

\begin{proof}
This result is part of \cite{hess96}~Theorem 5.1. We give the proof here for completeness.  Fix $\omega\notin N$ and $u\in \Vset$. The Lipschitz property (\ref{lipcond1}) of $g_n$ transfers to $h_n$ and by Proposition \ref{prop10},  $\textsf{ls}_e h_n(\omega,u)=\sup_{k\geq 1} \limsup_{n\to\infty} h_n^{(k)}(\omega,u)$. For any $k\geq 1$ there exists a sequence $\{v_p,\;p\geq 1\}$, $v_p=v_p(u,k)\in \Vset$ such that $\phi^{(k)}(u)=\inf_{p\geq 1}\{\phi(v_p)+kd(u,v_p)\}$. Then
\begin{multline*}
\limsup_{n\to\infty} h_n^{(k)}(\omega,u)=\limsup_{n\to\infty} \inf_{v\in \Vset}\{h_n(\omega,v) +kd(u,v)\}\leq \inf_{v\in \Vset}\limsup_{n\to\infty}\{h_n(\omega,v) +kd(u,v)\}\\
\leq \inf_{p\geq 1}\limsup_{n\to\infty}\{h_n(\omega,v_p) +kd(u,v_p)\}=\inf_{p\geq 1}\{\phi(v_p)+kd(u,v_p)\}=\phi^{(k)}(u).
\end{multline*}
Taking the supremum over $k$ on both side gives the result.
\end{proof}

We now consider the case where $\{X_k,\,k\geq 1\}$ is an i.i.d. sequence.
\begin{prop}\label{prop12}Let $\{g_n,\;n\geq 1\}$ be as in Proposition \ref{prop10}. Suppose that there exists a real-valued, $\e\times \B(\Vset)$-measurable function $g$ such that 
\begin{equation}\label{lipcond2}
\sup_{x\in E}\left|g(x,u)-g(x,v)\right| \leq c\dist(u,v) \;\; u,v\in \Vset,\end{equation}
where $c$ can be taken as in (\ref{lipcond1}), and for any  $(x,u)\in E\times \Vset$
\begin{equation}\label{unifconv}\lim_{n\to \infty} \left|g_n(x,u)- g(x,u)\right|=0 .\end{equation}
Let $\{X_k,\;k\geq 1\}$ be a sequence of $E$-valued, i.i.d. random variables define on some probability space $(\Omega,\mathcal{A},\PP)$. 
 We define, $\phi(u)\eqdef \PE\left(g(X_1,u)\right)$.  Then there exists a $\PP$-negligible subset $N$ of $\Omega$ such that for any $u\in \Vset$ and $\omega\in\Omega\setminus N$, 
\[h_n(\omega,u)\;\;\mbox{epi-converges to } \;\;\phi(u),\;\mbox{ as }\;n\to\infty\]
where $h_n(\omega,u)\eqdef n^{-1}\sum_{k=1}^ng_n(X_k(\omega),u)$.
\end{prop}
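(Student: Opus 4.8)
The plan is to establish the epi-convergence $h_n(\omega,\cdot)\to\phi$ by proving the two epi-limit inequalities $\textsf{ls}_e h_n(\omega,\cdot)\le \phi\le \textsf{li}_e h_n(\omega,\cdot)$ off a single $\PP$-negligible set. The upper inequality is exactly the conclusion of Proposition \ref{prop11}, whose hypothesis is the ordinary (pointwise) almost sure convergence $h_n(\omega,u)\to\phi(u)$ at every $u\in\Vset$; the lower inequality will follow for free from the uniform Lipschitz property. Thus the entire substance of the argument is concentrated in proving a pointwise strong law of large numbers for the triangular array $h_n(\omega,u)=n^{-1}\sum_{k=1}^n g_n(X_k(\omega),u)$.

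First I would fix $u\in\Vset$ and split
\[
h_n(\omega,u)=\frac1n\sum_{k=1}^n g(X_k(\omega),u)+\frac1n\sum_{k=1}^n\bigl(g_n(X_k(\omega),u)-g(X_k(\omega),u)\bigr).
\]
Letting $n\to\infty$ in (\ref{unifconv}) at $u_0$ shows $g(x,u_0)=0$, so by (\ref{lipcond2}) the map $x\mapsto g(x,u)$ is bounded by $c\,\dist(u,u_0)$, hence integrable; the classical strong law sends the first average to $\phi(u)=\PE(g(X_1,u))$ almost surely. The hard part is the error term, whose summand depends on $n$, so that no single law of large numbers applies. I would dominate its tail: set $R_N(x)\eqdef\sup_{m\ge N}|g_m(x,u)-g(x,u)|$, which is $\e$-measurable (a countable supremum of sections of $\e\times\B(\Vset)$-measurable functions), is bounded by $2c\,\dist(u,u_0)$, and, by (\ref{unifconv}), decreases pointwise to $0$ as $N\to\infty$. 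For every $n\ge N$ the error average is bounded in absolute value by $n^{-1}\sum_{k=1}^n R_N(X_k(\omega))$, which converges almost surely to $\PE(R_N(X_1))$ by the strong law applied to the fixed integrable function $R_N$. Hence $\limsup_n$ of the error is at most $\PE(R_N(X_1))$ almost surely; letting $N\to\infty$ and using bounded convergence ($\PE(R_N(X_1))\to0$) forces the error to vanish, giving $h_n(\omega,u)\to\phi(u)$ almost surely for each fixed $u$.

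Finally I would upgrade to a common null set and assemble the two inequalities. Since $\Vset$ is Polish it has a countable dense set $\{u_j\}$; intersecting the countably many full-measure events yields a single set $\Omega\setminus N$ on which $h_n(\omega,u_j)\to\phi(u_j)$ for all $j$. As each $h_n(\omega,\cdot)$ is $c$-Lipschitz by (\ref{lipcond1}) and $\phi$ is $c$-Lipschitz by (\ref{lipcond2}), a routine $3\varepsilon$ approximation extends the convergence to every $u\in\Vset$ for all $\omega\notin N$. Proposition \ref{prop11} then delivers $\textsf{ls}_e h_n(\omega,u)\le\phi(u)$. For the reverse inequality, for any $v\in B(u,k^{-1})$ the Lipschitz bound gives $h_n(\omega,v)\ge h_n(\omega,u)-c/k$, so $\liminf_n\inf_{v\in B(u,k^{-1})}h_n(\omega,v)\ge\phi(u)-c/k$, and taking the supremum over $k$ gives $\textsf{li}_e h_n(\omega,u)\ge\phi(u)$. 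Combining the two bounds yields epi-convergence of $h_n(\omega,\cdot)$ to $\phi$ for every $\omega\notin N$. The only genuinely delicate point is the triangular-array error term, which the monotone domination by $R_N$ reduces to two applications of the ordinary strong law of large numbers.
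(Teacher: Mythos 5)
Your proof is correct, but it takes a genuinely lighter route than the paper's in two places. For the pointwise strong law, the paper applies a law of large numbers for \emph{arrays} to $n^{-1}\sum_{k=1}^n\bigl(g_n(X_k,u)-\phi_n(u)\bigr)$, where $\phi_n(u)\eqdef\PE\left(g_n(X_1,u)\right)$, and separately shows $\phi_n(u)\to\phi(u)$ by dominated convergence; your decomposition into the $g$-average plus an error term dominated by the monotone envelope $R_N(x)=\sup_{m\ge N}|g_m(x,u)-g(x,u)|$ reduces everything to two applications of the classical Kolmogorov SLLN for a \emph{fixed} integrable summand, with the side benefit that $R_N$ is plainly $\e$-measurable, whereas the paper's route must pass through universally measurable Lipschitz regularizations. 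The more substantial divergence is the lower epi-limit inequality $\textsf{li}_e\, h_n(\omega,\cdot)\ge\phi$: following \cite{hess96}, the paper works with the Lipschitz approximations $g_n^{(k)}$, applies the array LLN to them, invokes Fatou's lemma, proves the pointwise bound $\liminf_n g_n^{(k)}(x,u)\ge g^{(k)}(x,u)$ (this is where the normalization $c<1$ is used), and then identifies $\sup_{k}\PE\left(g^{(k)}(X_1,u)\right)=\phi(u)$ via Proposition \ref{prop10}; you instead observe that since each $h_n(\omega,\cdot)$ is $c$-Lipschitz, $\inf_{v\in B(u,k^{-1})}h_n(\omega,v)\ge h_n(\omega,u)-c/k$, so the pointwise convergence already established on $\Omega\setminus N$ gives $\textsf{li}_e\, h_n(\omega,u)\ge\phi(u)$ directly. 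This shortcut is legitimate precisely because in Proposition \ref{prop12} the limit integrand $g$ is itself uniformly Lipschitz by (\ref{lipcond2}), so $\phi$ is continuous and pointwise convergence of the equi-Lipschitz family $h_n(\omega,\cdot)$ is in fact locally uniform, making epi-convergence automatic; by the same token you could have dispensed with Proposition \ref{prop11} entirely, since taking $v=u$ in the infimum yields $\textsf{ls}_e\, h_n\le\phi$ at once. What the paper's heavier machinery buys is robustness, since Propositions \ref{prop10}--\ref{prop11} are built for settings where the limit integrand need not be Lipschitz; what your argument buys is a shorter, self-contained proof that avoids citing an array LLN and the universal-measurability detour.
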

\begin{proof}
Notice that we can assume without any loss of generality that the constant $c$ in (\ref{lipcond1}) and (\ref{lipcond2}) is smaller than $1$. Otherwise simply divide $g$ and $g_n$ by $2c$, say. It follows from (\ref{lipcond1}) that
\begin{equation}\label{unifboundgn}
\sup_{n\geq 1}\sup_{x\in E}|g_n(x,u)|\leq cd(u,u_0).\end{equation}
This implies that $g_n$ is bounded in $x$ and that $\phi_n(u)\eqdef \PE\left(g_n(X_1,u)\right)$ is well-defined  and is uniformly bounded in $n$ for each $u$. Now, since $g_n(x,u)$ converges pointwise to $g(x,u)$, we can then apply the  Lebesgue dominated convergence theorem to conclude that $\phi_n(u)\to \phi(u)$ for each $u\in \Vset$. (\ref{lipcond2}) implies also that $\phi$ is Lipschitz.

Furthermore, by the  law of large numbers for arrays of independent random variables, for each $u\in \Vset$, there exists a measurable set $N_1(u)\subseteq\Omega$, $\PP(N_1(u))=0$ such that  for all $\omega\notin N_1(u)$, $\frac{1}{n}\sum_{k=1}^n\left(g_n(X_k(\omega),u)-\phi_n(u)\right)$ converges to zero. Since $\phi_n(u)\to \phi(u)$ and using (\ref{lipcond1}) and the Polish assumption, we conclude that there exists $N_1\subseteq\Omega$, $\PP(N_1)=0$ such that for all $\omega\notin N_1$,
$\lim_{n\to\infty} h_n(\omega,u) =\phi(u),\;\;\mbox{ for all }u\in V$. By Proposition \ref{prop11}, we obtain for all $\omega\notin N_1$, $\textsf{ls}_eh_n(\omega,u)\leq \phi(u)$, for all $u\in \Vset$.  

We will now show that there exists $N_2\subseteq\Omega$, $\PP(N_2)=0$ such that for all $\omega\notin N_2$,
\begin{equation}\label{slln2}
\liminf_{n\to\infty}h^{(k)}_n(\omega,u)\geq \PE\left(g^{(k)}(X_1,u)\right), \;\;\mbox{ for all }\;\;u\in \Vset,\;k\geq 1.\end{equation}
By Proposition \ref{prop10}, we can then deduce that for all $\omega\in \Omega\setminus N_2$, and for all $u\in \Vset$, $\textsf{li}_e h_n(\omega,u) \geq \sup_{k\geq 1}\PE\left(g^{(k)}(X_1,u)\right)=\lim_{k\to\infty}\PE\left(g^{(k)}(X_1,u)\right)= \phi(u)$, by dominated convergence. And the result will be proved.

Let us show that (\ref{slln2}) holds. Fix $u\in \Vset$, $k\geq 1$ integer. Notice that $g_n^{(k)}(x,u)\leq g_n(x,u)$ and given the boundedness of $g_n$, we apply the law of large numbers  to $g_n^{(k)}$ to conclude that there exists $N_2\subseteq\Omega$, $\PP(N_2)=0$ such that for all $\omega\notin N_2$,
\begin{multline}\label{slln3} \liminf_{n\to\infty} h^{(k)}_n(\omega,u)\geq\liminf_{n\to\infty} \frac{1}{n}\sum_{i=1}^n g_n^{(k)}(X_i(\omega),u) \\
=\lim_{n\to\infty}\frac{1}{n}\sum_{i=1}^n\left(g_n^{(k)}(X_i(\omega),u)-\PE\left(g_n^{(k)}(X_1,u)\right)\right) +  \liminf_{n\to\infty}\PE\left(g_n^{(k)}(X_1,u)\right) \\
= \liminf_{n\to\infty}\PE\left(g_n^{(k)}(X_1,u)\right).\end{multline}
We obtain as a consequence of  (\ref{lipcond1}) that  $|g_n(x,u)|\leq c\dist(u,u_0)$, for all $n,x,u$. Consequently,  for any $v\in V$, and $k\geq 1$, $g_n(x,v)+k \dist(u,v)\geq -c \dist(v,u_0)+k\dist(u,v)\geq -c \dist(u,u_0)$. This shows that there exists a finite constant $C(u)$ (for example $c\dist(u,u_0)$) such that $g_n^{(k)}(x,u)+C(u)\geq 0$ for all $x\in E$. By Fatou's lemma, we deduce that
\begin{equation}\label{slln4}
\liminf_{n\to\infty}\PE\left(g_n^{(k)}(X_1,u)\right)\geq \PE\left(\liminf_{n\to\infty}g_n^{(k)}(X_1,u)\right).\end{equation}

Fix $x\in E$. Given $\epsilon>0$, we can find $v_0=v_0(x,u,n,k,\epsilon)\in \Vset$ such that $g_n^{(k)}(x,u)> g_n(x,v_0)+k\dist(u,v_0)-\epsilon$. Because of (\ref{lipcond1}), $v_0\in B(u,\epsilon/(1-c))$, where $B(x,r)$ is the ball of center $x$ and radius $r$. Indeed, if $\dist(u,v)>\epsilon/(1-c)$, then $g_n(x,v)+k\dist(u,v)\geq  g_n(x,u)+(k-c) \dist(u,v)\geq g_n(x,u)+\epsilon\geq g_n^{(k)}(x,u)+\epsilon$. Thus 
\begin{multline*}
g_n^{(k)}(x,u)> g_n(x,v_0)+k\dist(u,v_0)-\epsilon = g(x,v_0)+k\dist(u,v_0) + \left(g(x,u)-g(x,v_0)\right) \\
+ \left(g_n(x,u)-g(x,u)\right) + \left(g_n(x,v_0)-g_n(x,u)\right) -\epsilon\\
\geq g^{(k)}(x,u)  +  \left(g_n(x,u)-g(x,u)\right) -(1-c)^{-1}\epsilon.\end{multline*}
Taking the $\liminf$ as $n\to\infty$ on both side and letting $\epsilon\to 0$ together with (\ref{unifconv}) gives $\liminf_{n\to\infty}g_n^{(k)}(x,u)\geq g^{(k)}(x,u)$. Combining that with (\ref{slln3}) and (\ref{slln4})  yields (\ref{slln2}).
\end{proof}

\subsubsection{Proof of Theorem \ref{thm2}}\label{proofthm2}
Write $U_n(\theta)=\bar U_n(\theta) + r_n(\theta)$, where 
\[r_n(\theta)=n^{-1}\sum_{(s,\ell)\in\underline{D}_n^2}\left(q_{\lambda_n}(|\theta_\star(s,\ell)+\theta(s,\ell)|)-q_{\lambda_n}(|\theta_\star(s,\ell)|)\right).\]
Set $\mathcal{L}_0^{(n)}(\theta)\eqdef\{(s,\ell)\in\underline{D}_n^2:\; \theta(s,\ell)\neq0\}$. By  the Mean Value Theorem and A\ref{A0},
\begin{multline*}
\left|r_n(\theta)\right|=n^{-1}\lambda_n\left|\sum_{(s,\ell)\in\mathcal{L}_0^{(n)}(\theta)}\lambda_n^{-1}\left(q_{\lambda_n}(|\theta_\star(s,\ell)+\theta(s,\ell)|)-q_{\lambda_n}(|\theta_\star(s,\ell)|)\right)\right|\\
\leq n^{-1}\lambda_n \sum_{(s,\ell)\in\mathcal{L}_0^{(n)}(\theta)} c \left(|\theta_\star(s,\ell)+\theta(s,\ell)|-|\theta_\star(s,\ell)|\right)\leq cn^{-1}\lambda_n\|\theta\|_1,\end{multline*}
for some finite constant $c$. Thus Lemma \ref{lem1} applies and it is enough to show that almost surely, $\bar U_n$ epi-converges to $k(\theta_\star;\cdot)$.  

To do so, we apply Proposition \ref{prop12}. Take $E=\Xset^\S$ with generic element $x=\{x(s),\;s\in \S\}$ and $V=\M_1$ and
\[g_n(x,\theta)=\sum_{s\in D_n}-\log\left(\frac{f_{\theta_\star+\theta}^{(s)}(x_s\vert x_{\partial_n s})}{f_{\theta_\star}^{(s)}(x_s\vert x_{\partial_n s})}\right).\]
The limiting function $g$ is given by 
\[g(x,\theta)=\sum_{s\in \S}-\log\left(\frac{f_{\theta_\star+\theta}^{(s)}(x_s\vert x_{\S\setminus\{s\}})}{f_{\theta_\star}^{(s)}(x_s\vert x_{\S\setminus\{s\}})}\right).\]
We have seen earlier that as a consequence of (Equation \ref{Id:PS}), $|g(x,\theta)|<\infty$. It is clear that $g_n$ is a real-valued normal integrand, $g_n(x,\textsf{0})=0$ and it follows also  from (\ref{Id:PS}) and (\ref{boundBC}) that
\[\sup_{x\in E}\left|g(x,\theta)-g(x,\theta')\right| + \sup_{n\geq 1}\sup_{x\in E} \left|g_n(x,\theta)-g_n(x,\theta')\right|\leq C\|\theta-\theta'\|_1,\]
for some finite constant $C$. Thus (\ref{lipcond1}) and (\ref{lipcond2}) hold.  It remains to show (\ref{unifconv}).

Consider $x\in\Xset^\S$ and $\theta\in\M_1$. Since $\|\theta\|_1<\infty$, for any $\epsilon>0$, there exists a finite subset $\Lambda_\epsilon\subset\S$ such that $\sum_{(u,v)\notin\underline{\Lambda_\epsilon}^2}|\theta(u,v)|<\epsilon$. We have
\begin{multline}\label{proofthm1eq1}
\left|g_n(x,\theta)-g(x,\theta)\right|\leq \sum_{s\in D_n}\left|-\log\left(\frac{f_{\theta_\star + \theta}^{(s)}(x_s\vert x_{\partial_n s})}{f_{\theta_\star}^{(s)}(x_s\vert x_{\partial_n s})}\right) + \log\left(\frac{f_{\theta_\star + \theta}^{(s)}(x_s\vert x_{\S\setminus\{s\}})}{f_{\theta_\star}^{(s)}(x_s\vert x_{\S\setminus\{s\}})}\right)\right|\\
+\sum_{s\in \S\setminus D_n}\left|\log\left(\frac{f_{\theta_\star + \theta}^{(s)}(x_s\vert x_{\S\setminus\{s\}})}{f_{\theta_\star}^{(s)}(x_s\vert x_{\S\setminus\{s\}})}\right)\right|.\end{multline}
We first deal with the second term on the right-hand side of (\ref{proofthm1eq1}). Fix $\epsilon>0$. Take $n$ large enough such that $\Lambda_\epsilon\subseteq D_n$. Then using again (\ref{Id:PS}) and (\ref{boundBC}), we have
\[\sum_{s\in \S\setminus D_n}\left|\log\left(\frac{f_{\theta_\star + \theta}^{(s)}(x_s\vert x_{\S\setminus\{s\}})}{f_{\theta_\star}^{(s)}(x_s\vert x_{\S\setminus\{s\}})}\right)\right|\leq C\sum_{s\in \S\setminus D_n}\sum_{\ell\in \S}|\theta(s,\ell)| \leq C\epsilon.\]
The first term is obtained from
\begin{multline*}
\left(\log f_{\theta_\star+\theta}^{(s)}(x_s\vert x_{\S\setminus\{s\}}) - \log f_{\theta_\star}^{(s)}(x_s\vert x_{\S\setminus\{s\}})\right) - \left(\log f_{\theta_\star+\theta}^{(s)}(x_s\vert x_{\partial_n s})-\log f_{\theta_\star}^{(s)}(x_s\vert x_{\partial_n s})\right)\\
=\sum_{\ell\in\S}\theta(s,\ell)B(x_s,x_\ell) -\int_0^1 dt\left\{\int_\Xset \sum_{\ell\in \S}\theta(s,\ell)\bar B_{s,\ell}(u,x_\ell)  f^{(s)}_{\theta_\star+t\theta}(u\vert x_{\S\setminus\{s\}})\rho(du)\right\}\\
-\sum_{\ell\in D_n}\theta(s,\ell)B(x_s,x_\ell)  + \int_0^1 dt\left\{\int_\Xset \sum_{\ell\in D_n}\theta(s,\ell)\bar B_{s,\ell}(u,x_\ell) f^{(s)}_{\theta_\star +t\theta}(u\vert x_{\partial_n s})\rho(du)\right\},
\end{multline*}
where $\bar B_{s,\ell}(x,y)=B_0(x)$ if $\ell=s$ and $\bar B_{s,\ell}(x,y)=B(x,y)$ otherwise. The above equality follows from Lemma \ref{lemlip}. We use this to conclude that there exists a finite constant $C$ such that 
\begin{multline*}
\left|-\log\left(\frac{f_{\theta_\star + \theta}^{(s)}(x_s\vert x_{\partial_n s})}{f_{\theta_\star}^{(s)}(x_s\vert x_{\partial_n s})}\right) + \log\left(\frac{f_{\theta_\star + \theta}^{(s)}(x_s\vert x_{\S\setminus\{s\}})}{f_{\theta_\star}^{(s)}(x_s\vert x_{\S\setminus\{s\}})}\right)\right|\leq C\sum_{\ell\in \S\setminus D_n}|\theta(s,\ell)| \\
+\sum_{\ell\in D_n}|\theta(s,\ell)|\int_0^1 dt\left| \int_\Xset \bar B_{s,\ell}(u,x_\ell)\left(f^{(s)}_{\theta_\star+t\theta}(u\vert x_{\S\setminus\{s\}})-f^{(s)}_{\theta_\star+t\theta}(u\vert x_{\partial_n s})\right)\rho(du)\right|.\end{multline*}
Taking the sum over $s\in D_n=\Lambda_\epsilon\cup D_n\setminus\Lambda_\epsilon$  we get
\begin{multline*}
\sum_{s\in D_n}\left|-\log\left(\frac{f_{\theta_\star + \theta}^{(s)}(x_s\vert x_{\partial_n s})}{f_{\theta_\star}^{(s)}(x_s\vert x_{\partial_n s})}\right) + \log\left(\frac{f_{\theta_\star + \theta}^{(s)}(x_s\vert x_{\S\setminus\{s\}})}{f_{\theta_\star}^{(s)}(x_s\vert x_{\S\setminus\{s\}})}\right)\right|\leq C\sum_{s\in D_n}\sum_{\ell\in \S\setminus D_n}|\theta(s,\ell)|  \\
+\sum_{s\in D_n}\sum_{\ell\in D_n}|\theta(s,\ell)|\int_0^1 dt \left|\int_\Xset \bar B_{s,\ell}(u,x_\ell)\left(f^{(s)}_{\theta_\star+t\theta}(u\vert x_{\S\setminus\{s\}})-f^{(s)}_{\theta_\star+t\theta}(u\vert x_{\partial_n s})\right)\rho(du)\right|\\
\leq C\epsilon + \sum_{s\in \Lambda_\epsilon}\sum_{\ell\in\S}|\theta(s,\ell)|\int_0^1 dt \left|\int_\Xset \bar B_{s,\ell}(u,x_\ell)\left(f^{(s)}_{\theta_\star+t\theta}(u\vert x_{\S\setminus\{s\}})-f^{(s)}_{\theta_\star+t\theta}(u\vert x_{\partial_n s})\right)\rho(du)\right|.\end{multline*}
For each $s$, the inner sum in the last term converges to 0 as $n\to\infty$. Since $\Lambda_\epsilon$ is finite, we conclude that 
\[\lim_{n\to\infty}\sum_{s\in D_n}\left|-\log\left(\frac{f_{\theta_\star + \theta}^{(s)}(x_s\vert x_{\partial_n s})}{f_{\theta_\star}^{(s)}(x_s\vert x_{\partial_n s})}\right) + \log\left(\frac{f_{\theta_\star + \theta}^{(s)}(x_s\vert x_{\S\setminus\{s\}})}{f_{\theta_\star}^{(s)}(x_s\vert x_{\S\setminus\{s\}})}\right)\right|\leq C\epsilon.\]
It follows that (\ref{unifconv}) holds. Finally by conditioning on $X_{\S\setminus\{s\}}$, we notice that
\begin{multline*}
\PE_{\theta_\star}\left[-\log\left(\frac{f_{\theta_\star+\theta}^{(s)}(X_s\vert X_{\S\setminus\{s\}})}{f_{\theta_\star}^{(s)}(X_s\vert X_{\S\setminus\{s\}})}\right)\right]=\PE_{\theta_\star}\left(\int -\log\left(\frac{f_{\theta_\star+\theta}^{(s)}(u\vert X_{\S\setminus\{s\}})}{f_{\theta_\star}^{(s)}(u\vert X_{\S\setminus\{s\}})}\right)f_{\theta_\star}^{(s)}(u\vert X_{\S\setminus\{s\}})du\right)\\
=k^{(s)}(\theta_\star,\theta).\end{multline*}
The theorem is proved.

\begin{flushright}
$\square$
\end{flushright}

\subsubsection{Proof of Corollary \ref{coro1}}\label{proofcoro1}
Let us first show that $k(\theta_\star,\cdot)$ admits a unique minimum at $\textsf{0}$.  Since $k^{(s)}(\theta_\star,\cdot)$ is nonnegative, $k(\theta_\star,\theta)=0$ implies that $k^{(s)}(\theta_\star,\theta)=0$ for all $s\in\S$. We use Lemma \ref{lemlip} to write
\begin{multline*}
-\log f^{(s)}_{\theta_\star+\theta}(X_s\vert X_{\S\setminus\{s\}})+\log f^{(s)}_{\theta_\star}(X_s\vert X_{\S\setminus\{s\}})=\\
-\sum_{\ell\in \S}\theta(s,\ell)\left(B(X_s,X_\ell) -\int_\Xset \bar B_{s,\ell}(u,X_\ell)f^{(s)}_{\theta_\star}(u\vert X_{\S\setminus\{s\}})du\right) \\
+\int_\Xset \sum_{\ell\in\S}\theta(s,\ell)\bar B_{s,\ell}(u,X_\ell)\int_0^1dt\left(f^{(s)}_{\theta_\star+t\theta}(u\vert X_{\S\setminus\{s\}})-f^{(s)}_{\theta_\star}(u\vert X_{\S\setminus\{s\}})\right)du.\end{multline*}
 Taking the expectation on both side and using Lemma \ref{lemlip} again yields
\begin{multline*}
k^{(s)}(\theta_\star,\theta)=\int_0^1tdt\int_0^1d\tau\PE_\star\left[\textsf{Var}_{\theta_\star+t\tau\theta}\left(\sum_{\ell\in \S}\theta(s,\ell)\bar B_{s,\ell}(X_s,X_\ell)\vert X_{\S\setminus\{s\}}\right)\right]\\
=\int_0^1tdt\int_0^1d\tau\sum_{\ell,\ell'\in\S}\theta(s,\ell)\theta(s,\ell')\rho^{(s)}_{\theta_\star+t\tau\theta}(\ell,\ell').
\end{multline*}
Since $\rho^{(s)}_\theta$ is positive definite, $k^{(s)}(\theta_\star,\theta)=0$ if and only if $\theta(s,\ell)=0$ for all $\ell\in\S$. 

Now, let $\epsilon>0$. By tightness, there exists a compact subset $\compact$ of $\M_1$ such that $\sup_{n\geq 1} \cPP_\star\left((\hat\theta_n-\theta_\star^{(n)})\notin \compact\right)\leq \epsilon$. Therefore 
\begin{multline*}
\cPP_\star\left(\|\hat\theta_n-\theta_\star^{(n)}\|_1>\epsilon\right)\leq \epsilon + \cPP_\star\left((\hat\theta_n-\theta_\star^{(n)})\in \compact,\;\;\|\hat\theta_n-\theta_\star^{(n)}\|_1>\epsilon\right)\\
\epsilon +  \cPP_\star\left(\cup_{m\geq n}\left\{(\hat\theta_m-\theta_\star^{(m)})\in \compact,\;\;\|\hat\theta_m-\theta_\star^{(m)}\|_1>\epsilon\right\}\right).\end{multline*}
We conclude that 
\[\lim_{n\to\infty}\cPP_\star\left(\|\hat\theta_n-\theta_\star^{(n)}\|_1>\epsilon\right)\leq \epsilon + \cPP_\star\left(\left\{(\hat\theta_n-\theta_\star^{(n)})\in \compact,\;\;\|\hat\theta_n-\theta_\star^{(n)}\|_1>\epsilon\right\}\;\;\textsf{i.o.}\right).\]
Corollary 7.20 of \cite{dalmaso93} and Theorem \ref{thm2} imply that the probability on the rhs is zero. This ends the proof.
 
\begin{flushright}
$\square$
\end{flushright}

\subsection{Rate of convergence: proof of Theorem \ref{thm3}}\label{proofthm3}

The proof of the theorem is adapted from Chapter 3.4 of \cite{vaartetwellner96} of the rate of convergence of M-estimators. Fix $\epsilon>0$. 
Let $C,c_0<\infty$ such that $\|B_0\|_\infty+\|B\|_\infty\leq C$ and $\sup_{\lambda>0}\sup_{x>0}q_\lambda'(x)\leq c_0$. Under the stated assumptions, $\alpha_n^{-1}r_na_n^{1/2}\lambda_nn^{-1}=O(1)$, as $n\to\infty$. Therefore, we can take  $M>1$ large enough so that for all $n\geq 1$,
\begin{equation}\label{choiceM}
16c_0\alpha_n^{-1} r_na_n^{1/2}\lambda_nn^{-1} \leq 2^M,\;\;\mbox{ and }\;\;\;16\sum_{j\geq M}2^{-j}\leq \epsilon.\end{equation}
For $j\geq 1$, define $\Theta_{n,j}=\{\theta\in\M^{(n)}(a_n,\tau_n):\; 2^{j-1}<r_n\|\theta\|_2\leq 2^j\}$. Clearly we have,
\[
\left\{r_n\|\hat\theta_n-\theta_\star^{(n)}\|_2>2^M\right\}
\subseteq \bigcup_{j\geq M}\left\{\hat\theta_n-\theta_\star^{(n)}\in\Theta_{n,j}\right\}.\]
On the other hand, since $U_n$ admits a minimum at $\hat\theta_n-\theta_\star^{(n)}$, almost surely, and $U_n(\textsf{0})=0$, it follows that $\{\hat\theta_n-\theta_\star^{(n)}\in\Theta_{n,j}\}\subseteq\{\inf_{\theta\in\Theta_{n,j}}U_n(\theta)\leq 0\}$. 
We conclude that
\begin{equation}\label{proofthm3eq1}
\cPP_\star\left(r_n\|\hat\theta_n-\theta_\star^{(n)}\|_2>2^M\right)\leq \sum_{j\geq M}\cPP_\star\left(\inf_{\theta\in\Theta_{n,j}}U_n(\theta)\leq 0\right).\end{equation}
We  recall that
\begin{multline*}
U_n(\theta)=n^{-1}\left(\bar\ell_n(\theta_\star^{(n)})-\bar\ell_n(\theta_\star^{(n)}+\theta)\right)\\
+n^{-1}\sum_{(s,\ell)\in\underline{D}_n^2}\left(q_{\lambda_n}(|\theta_\star(s,\ell)+\theta(s,\ell)|)-q_{\lambda_n}(|\theta_\star(s,\ell)|)\right).
\end{multline*}
Set $\mathcal{L}_0^{(n)}(\theta)=\{(s,\ell)\in \underline{D}_n^2:\;\theta(s,\ell)\neq 0\}$. For $\theta\in\Theta_{n,j}$, and using the mean value theorem and A\ref{A0},
\begin{multline}\label{boundpenalty}
n^{-1}\sum_{(s,\ell)\in\underline{D}_n^2}\left(q_{\lambda_n}(|\theta_\star(s,\ell)+\theta(s,\ell)|)-q_{\lambda_n}(|\theta_\star(s,\ell)|)\right)\\
=n^{-1}\lambda_n\sum_{(s,\ell)\in\mathcal{L}_0^{(n)}(\theta)} \lambda_n^{-1}\left(q_{\lambda_n}(|\theta_\star(s,\ell)+\theta(s,\ell)|)-q_{\lambda_n}(|\theta_\star(s,\ell)|)\right)
\leq c_0n^{-1}\lambda_n\sum_{(s,\ell)\in\mathcal{L}_0^{(n)}(\theta)}|\theta(s,\ell)|\\
\leq  c_0n^{-1}\lambda_n a_n^{1/2}\|\theta\|_2\leq c_0n^{-1}\lambda_n a_n^{1/2} 2^jr_n^{-1}.\end{multline}
Now, for $\theta\in\M^{(n)}$, $n^{-1}\left(\bar\ell_n(\theta_\star^{(n)})-\bar\ell_n(\theta_\star^{(n)}+\theta)\right)=n^{-1}\sum_{i=1}^n m_{n,\theta}(X^{(i)})=n^{-1}\sum_{i=1}^n \bar m_{n,\theta}(X^{(i)}) +M_n(\theta)$,  where $\bar m_{n,\theta}(x)=m_{n,\theta}(x)-M_n(\theta)$, with $m_{n,\theta}$ as in (\ref{defmn}), and
\begin{multline*} M_n(\theta)=\sum_{s\in D_n}\sum_{\ell\in D_n}\theta(s,\ell)\PE_\star\left[\int_0^1dt\int \bar B_{s,\ell}(u,X_\ell)\left(f_{\theta_\star^{(n)}}^{(s)}(u\vert X_{\partial_n s})-f_{\theta^{(n)}_\star+t\theta}^{(s)}(u\vert X_{\partial_n s})\right)du\right]\\
= \int_0^1tdt\int_0^1 d\tau \sum_{s\in D_n}\PE_\star\left[\textsf{Var}_{\theta_\star^{(n)}+ t\tau\theta}\left(\sum_{\ell\in D_n}\theta(s,\ell)B(X_s,X_\ell)\vert X_{\partial_n s}\right)\right]\geq \frac{\alpha_n}{2}\|\theta\|_2^2, \end{multline*}
using Lemma \ref{lemlip} and A\ref{B2}.  
Notice that the first part of (\ref{choiceM}) implies that $\frac{\alpha_n}{4}2^{2(j-1)}r_n^{-2}\geq c_02^{j}r_n^{-1}a_n^{1/2}\lambda_nn^{-1}$ whenever $j\geq M$. Therefore, using (\ref{boundpenalty}),
\[
\inf_{\theta\in\Theta_{n,j}}\left\{U_n(\theta)\right\}\geq \inf_{\theta\in\Theta_{n,j}}\left\{n^{-1}\sum_{i=1}^n \bar m_{n,\theta}(X^{(i)})\right\} + \frac{\alpha_n}{4} 2^{2(j-1)}r_n^{-2},\]
and (\ref{proofthm3eq1}) becomes
\begin{multline}\label{proofthm3eq2}
\cPP_\star\left(r_n\|\hat\theta_n-\theta_\star^{(n)}\|_1>2^M\right)\leq
\sum_{j\geq M}\cPP_\star\left(\sup_{\theta\in\Theta_{n,j}}\left|n^{-1/2}\sum_{i=1}^n \bar m_{n,\theta}(X^{(i)})\right|\geq \frac{\alpha_n}{16}\frac{\sqrt{n}2^{2j}}{r_n^{2}}\right)\\
\leq \frac{16}{\alpha_n}\frac{r_n^2}{\sqrt{n}}\sum_{j\geq M}2^{-2j}\left[\cPE_\star\left(\|\G_n\|_{\F_{n,j}}\right) + \sup_{\theta\in\Theta_{n,j}}\sqrt{n}\left|\PE_{\theta_\star}(m_{n,\theta}(X))-M_n(\theta)\right|\right],\end{multline}
where $\G_n$ is the empirical process associated to the family $\F_{n,j}=\F_{n,2^jr_n^{-1}}$: for $f\in\F_{n,j}$, $\G_n(f)=n^{-1/2}\sum_{i=1}^n \left(f(X^{(i)})-\PE_\star(f(X^{(1)})\right)$. And $\|\G_n\|_{\F_{n,j}}\eqdef \sup_{\theta\in\F_{n,j}}|\G_n(f)|$. 

Using the fact that $\PE_\star\left(B(X_s,X_\ell)\vert X_{\S\setminus\{s\}}\right)=\int \bar B_{s,\ell}(u,X_\ell)f_{\theta_\star}^{(s)}(u\vert X_{\S\setminus\{s\}})du$, $\mu_\star$-a.s., together with Lemma \ref{lemlip}, we have 
\begin{multline}\label{boundfin1}\left|\PE_{\theta_\star}(m_{n,\theta}(X))-M_n(\theta)\right|\\
=\left|\sum_{s\in D_n}\sum_{\ell\in D_n}\theta(s,\ell)\PE_\star\left[\int_\Xset \bar B_{s,\ell}(X_s,X_\ell)\left(f_{\theta_\star}^{(s)}(u\vert X_{\partial s})-f_{\theta_\star^{(n)}}^{(s)}(u\vert X_{\partial_n s})\right)du\right]\right|\\
\leq \sum_{s\in \Delta_n^{(c)}}\left(\sum_{\ell\in D_n}|\theta(s,\ell)|\right)\left(\sum_{\ell\in \partial s\setminus D_n}|\theta_\star(s,\ell)|\right)\leq b_n \tau_n \|\theta\|_2\leq b_n \tau_n 2^{j}r_n^{-1},\end{multline}
where $\Delta_n^{(c)}= \{s\in D_n:\; \partial s\setminus D_n \neq \emptyset\}$. 
 Notice that 
\[
|m_{n,\theta}(x)|\leq 2C\|\theta\|_{1}\leq c \beta_{n,j},\;\;\mbox{ for all }\;\;\theta\in \Theta_{n,j},\]
where $\beta_{n,j}=a_n^{1/2}2^jr_n^{-1}$, for some finite  constant $c$.  Also for $\theta\in\Theta_{n,j}$, the second part of B\ref{B2} yields
\[\PE_\star^{1/2}\left(m^2_{n,\theta}(X)\right) \leq \alpha_n' \|\theta\|_2\leq \delta_{n,j},\]
where $\delta_{n,j}=\alpha_n'2^jr_n^{-1}$ for some finite constant $c$. By Lemma 3.4.2 of \cite{vaartetwellner96},
\[\cPE_\star\left(\|\G_n\|_{\F_{n,j}}\right)\leq cJ_{[]}\left(\delta_{n,j},\F_{n,j},L^2(\mu_{\theta_\star})\right)\left(1+\frac{c\beta_{n,j}}{\sqrt{n}\delta_{n,j}^2}J_{[]}\left(\delta_{n,j},\F_{n,j},L^2(\mu_{\theta_\star})\right)\right),\]
for some finite constant $c$, where $J_{[]}\left(\delta_{n,j},\F_{n,j},L^2(\mu_{\theta_\star})\right)$ is the bracketing integral of the family $\F_{n,j}$ defined as
\[J_{[]}\left(\delta_{n,j},\F_{n,j},L^2(\mu_{\theta_\star})\right)=\int_0^{\delta_{n,j}} \sqrt{1+\log N_{[]}\left(\epsilon,\F_{n,j},L^2(\mu_{\theta_\star})\right)}d\epsilon.\]

For any $\theta,\theta'\in\Theta_{n,j}$, $\left|m_{n,\theta}(x)-m_{n,\theta'}(x)\right| \leq c\|\theta-\theta'\|_{1}\leq 2ca_n^{1/2}\|\theta-\theta'\|_2$, for all $x\in\Xset^\infty$. 
This Lipschitz property of the family $\F_{n,j}$, Theorem 2.7.11 of \cite{vaartetwellner96}  and  (\ref{coveringnumber}) imply that
\begin{multline*}
J_{[]}\left(\delta_{n,j},\F_{n,j},L^2(\mu_{\theta_\star})\right)\leq  c a_n^{1/2}\int_0^{\delta_{n,j}a_n^{-1/2}/4c} \sqrt{1+\log N\left(\epsilon,\Theta_{n,j},\|\cdot\|_2\right)}d\epsilon\\
\leq c\delta_{n,j}\sqrt{a_n\log\left(\frac{p_n}{a_n}\right)},\end{multline*}
for some finite constant $c$. Under the assumption $a_n\sqrt{\log p_n}=O(\alpha_n'\sqrt{n})$, we obtain  that
$n^{-1/2}\beta_{n,j}\delta_{n,j}^{-2}J_{[]}\left(\delta_{n,j},\F_{n,j},L^2(\mu_{\theta_\star})\right)\leq c n^{-1/2}a_n\sqrt{\log p_n}/\alpha_n'=O(1)$. As the result, $\cPE_\star\left(\|\G_n\|_{\F_{n,j}}\right)\leq c\delta_{n,j}\sqrt{a_n\log p_n}$. Combined with (\ref{boundfin1}) and (\ref{proofthm3eq2}) and the expression or $r_n$, it follows that $\cPP_\star\left(r_n\|\hat\theta_n-\theta_\star^{(n)}\|_1>2^M\right)\leq \epsilon c$ for some universal constant $c$. Since $\epsilon>0$ is arbitrary, the theorem follows.

\begin{flushright}
$\square$
\end{flushright}

\subsection{A comparison lemma}
\begin{lemma}\label{lemlip}
Let $(\Yset,\A,\nu)$ be a measure space where $\nu$ is a finite
measure. Let $g_1,g_2,f_1,f_2:\;\Yset\to\rset$ be bounded measurable
functions. For $i\in\{1,2\}$, define $Z_i=\int e^{g_i(y)}\nu(dy)$. For
$t\in [0,1]$, let $\bar g_t(\cdot)=tg_2(\cdot)+(1-t)g_1(\cdot)$ and
$Z_t=\int_\Yset e^{\bar g_t(y)}\nu(dy)$. Let $\bar f_t:\;\Yset\to\rset$ be such
that $\bar f_0=f_1$ and $\bar f_1=f_2$. Suppose that $\frac{d}{dt}\bar f_t(y)$ exists for
$\nu$-almost all $y\in\Yset$ and
$\sup_{t\in[0,1],y\in\Yset}|\frac{d}{dt}\bar f_t(y)|<\infty$.
Then
\begin{multline}
\int f_2(y)e^{g_2(y)}Z_{g_2}^{-1}\nu(dy)-\int
f_1(y)e^{g_1(y)}Z_{g_1}^{-1}\nu(dy)=\int_0^1dt\int_\Yset\left(\frac{d}{dt}\bar
f_t(y)\right)e^{\bar g_t(y)}Z_t^{-1}\nu(dy)\\
+ \int_0^1dt\textsf{Cov}_t\left(\bar f_t(X),(g_2-g_1)(X)\right),\end{multline}
where $\textsf{Cov}_t(U_1(X),U_2(X))$ is the covariance between
$U_1(X)$ and $U_2(X)$ assuming that $X\sim e^{\bar g_t(y)}Z_t^{-1}$.
\end{lemma}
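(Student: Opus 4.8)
The plan is to turn the identity into a one-dimensional fundamental-theorem-of-calculus computation. Introduce the path of expectations
\[
\Phi(t)\eqdef \int_\Yset \bar f_t(y)\,e^{\bar g_t(y)}Z_t^{-1}\nu(dy),\qquad t\in[0,1].
\]
Since $\bar g_0=g_1$ and $\bar g_1=g_2$ (whence $Z_0=Z_{g_1}$ and $Z_1=Z_{g_2}$), $\bar f_0=f_1$ and $\bar f_1=f_2$, the left-hand side of the claimed identity is exactly $\Phi(1)-\Phi(0)$. It therefore suffices to show that $\Phi$ is continuously differentiable on $[0,1]$ with $\Phi'(t)$ equal to the integrand on the right-hand side, and then to write $\Phi(1)-\Phi(0)=\int_0^1\Phi'(t)\,dt$.

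First I would record the facts that make $\Phi$ well defined and differentiable. Because $g_1,g_2$ are bounded, $\bar g_t$ is uniformly bounded, so $e^{\bar g_t}$ is bounded above and below by positive constants and $Z_t\in(0,\infty)$ for each $t$. Writing $\bar f_t(y)=f_1(y)+\int_0^t\frac{d}{ds}\bar f_s(y)\,ds$ and using the uniform bound on $\frac{d}{dt}\bar f_t$ together with the boundedness of $f_1$ shows that $\bar f_t$ is uniformly bounded in $(t,y)$. Since $\nu$ is finite, every integrand appearing below is dominated by a $\nu$-integrable constant uniformly in $t$, which licenses differentiating under the integral sign. I would then compute $\Phi'(t)$ by the quotient rule applied to $\Phi(t)=N(t)/Z_t$, where $N(t)=\int\bar f_t e^{\bar g_t}\nu(dy)$. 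Using $\frac{d}{dt}\bar g_t=g_2-g_1$, differentiation under the integral gives $N'(t)=\int(\frac{d}{dt}\bar f_t)e^{\bar g_t}\nu(dy)+\int\bar f_t(g_2-g_1)e^{\bar g_t}\nu(dy)$ and $Z_t'=\int(g_2-g_1)e^{\bar g_t}\nu(dy)$.

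Assembling $\Phi'(t)=N'(t)/Z_t-N(t)Z_t'/Z_t^2$ and normalizing each integral by $Z_t^{-1}$, so that it becomes an expectation under the probability measure $e^{\bar g_t}Z_t^{-1}\nu$, the three resulting terms group as
\[
\Phi'(t)=\int_\Yset\Big(\tfrac{d}{dt}\bar f_t(y)\Big)e^{\bar g_t(y)}Z_t^{-1}\nu(dy)+\Big(\PE_t\!\left[\bar f_t(X)(g_2-g_1)(X)\right]-\PE_t\!\left[\bar f_t(X)\right]\PE_t\!\left[(g_2-g_1)(X)\right]\Big),
\]
where $\PE_t$ denotes expectation under $e^{\bar g_t}Z_t^{-1}\nu$, and the parenthesized difference is precisely $\textsf{Cov}_t(\bar f_t(X),(g_2-g_1)(X))$. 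Integrating this identity over $t\in[0,1]$ yields the lemma. The only point demanding care is the interchange of differentiation and integration, together with the continuity (hence integrability) of $t\mapsto\Phi'(t)$ needed for the fundamental theorem of calculus; both are entirely controlled by the finiteness of $\nu$ and the uniform boundedness of $\bar g_t$, $\bar f_t$, and $\frac{d}{dt}\bar f_t$, so no substantive analytic difficulty arises beyond this bookkeeping.
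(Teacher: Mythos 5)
Your proposal is correct and is essentially the paper's own proof: the paper likewise sets $\Phi(t)=\int_\Yset \bar f_t(y)e^{\bar g_t(y)}Z_t^{-1}\nu(dy)$, writes the left-hand side as $\int_0^1 \Phi'(t)\,dt$, and obtains the two terms by differentiating under the integral sign. You merely make explicit the quotient-rule computation, the identification of the covariance term, and the domination/boundedness bookkeeping that the paper leaves implicit.
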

\begin{proof}
Under the stated assumptions, the function $t\to\int_\Yset
\bar f_t(y)e^{\bar g_t(y)}Z_t^{-1}\nu(dy)$ is differentiable under the integral sign and we have:
\[\int f_2(y)e^{g_2(y)}Z_{g_2}^{-1}\nu(dy)-\int
f_1(y)e^{g_1(y)}Z_{g_1}^{-1}\nu(dy)=\int_0^1\frac{d}{dt}\left(
  \int_\Yset \bar f_t(y)e^{\bar g_t(y)}Z_{t}^{-1}\nu(dy)\right)dt.\]
The identity follows by carrying the differentiation under the integral sign.
\end{proof}

With the choice $\bar f_t(y)=t f_2(y)+ (1-t)f_1(y)$, we get
\begin{multline}\label{conseq1lemlip}
\left|\int f_2(y)e^{g_2(y)}Z_{g_2}^{-1}\nu(dy)-\int
f_1(y)e^{g_1(y)}Z_{g_1}^{-1}\nu(dy)\right|\\
\leq
\|f_2-f_1\|_\infty+2(\|f_1\|_\infty+\|f_2\|_\infty)\|g_2-g_1\|_\infty.
\end{multline}

We will also need the following particular case. For bounded measurable function
$h_1,h_2:\;\Yset\to\rset$, we can take $f_i(y)\equiv \log\int
e^{h_i(u)}\nu(du)$, $i=1,2$, $\bar f_t(y)\equiv\log \int e^{t h_2(u)+(1-t)
  h_1(u)}\nu(du)$, and $g_1=g_2$ in the lemma and get:
\begin{multline*}\label{Id:PS}
\log \int e^{h_2(y)}\nu(dy)-\log\int
e^{h_1(y)}\nu(dy)=\int_0^1dt\left(\frac{d}{dt}\bar f_t\right)\\
=\int_0^1\int_{\Yset}\left(h_2(y)-h_1(y)\right)\frac{e^{t h_2(u)+(1-t)
  h_1(u)}}{\int e^{t h_2(u)+(1-t)
  h_1(u)}\nu(du)}\nu(dy).\end{multline*}
In particular,
\begin{equation}\label{Id:PS}
\left|\log \int e^{h_2(y)}\nu(dy)-\log\int
e^{h_1(y)}\nu(dy)\right|\leq \|h_2-h_1\|_\infty.\end{equation}

\vspace{3.0cm}

{\bf Acknowledgment:} I'm grateful to Lisa Levina, Jian Guo, George Michailidis, and Ji Zhu  for helpful discussions. This work is partly supported by NSF grant DMS 0906631.

\vspace{1cm}
\bibliographystyle{ims}
\bibliography{biblio}

\end{document}